\title{A Girsanov-type formula for a class of anticipative transforms of Brownian motion associated with exponential functionals}
\author{Yuu Hariya\thanks{Supported in part by JSPS KAKENHI Grant Number 22K03330}}
\date{\empty}
\numberwithin{equation}{section}
\theoremstyle{plain}
\newtheorem{thm}{Theorem}[section]
\newtheorem{prop}{Proposition}[section]
\newtheorem{cor}{Corollary}[section]
\newtheorem{lem}{Lemma}[section]
\theoremstyle{definition}
\theoremstyle{remark}
\newtheorem{rem}{Remark}[section]
\newtheorem{exm}{Example}[section]
\begin{document}

\newcommand\ND{\newcommand}
\newcommand\RD{\renewcommand}

\ND\N{\mathbb{N}}
\ND\R{\mathbb{R}}
\ND\Q{\mathbb{Q}}
\ND\C{\mathbb{C}}

\ND\F{\mathcal{F}}

\ND\kp{\kappa}

\ND\ind{\boldsymbol{1}}

\ND\al{\alpha }
\ND\la{\lambda }
\ND\La{\varLambda }
\ND\ve{\varepsilon}
\ND\Om{\Omega}

\ND\ga{\gamma}

\ND\lref[1]{Lemma~\ref{#1}}
\ND\tref[1]{Theorem~\ref{#1}}
\ND\pref[1]{Proposition~\ref{#1}}
\ND\sref[1]{Section~\ref{#1}}
\ND\ssref[1]{Subsection~\ref{#1}}
\ND\aref[1]{Appendix~\ref{#1}}
\ND\rref[1]{Remark~\ref{#1}} 
\ND\cref[1]{Corollary~\ref{#1}}
\ND\eref[1]{Example~\ref{#1}}
\ND\fref[1]{Fig.\ {#1} }
\ND\lsref[1]{Lemmas~\ref{#1}}
\ND\tsref[1]{Theorems~\ref{#1}}
\ND\dref[1]{Definition~\ref{#1}}
\ND\psref[1]{Propositions~\ref{#1}}
\ND\rsref[1]{Remarks~\ref{#1}}
\ND\sssref[1]{Subsections~\ref{#1}}

\ND\pr{\mathbb{P}}
\ND\ex{\mathbb{E}}

\ND\eb[1]{e^{B_{#1}}}
\ND\ebm[1]{e^{-B_{#1}}}

\ND\vp{\varphi}
\ND\eqd{\stackrel{(d)}{=}}
\ND\db[1]{B^{(#1)}}
\ND\dcb[1]{\cB ^{(#1)}}
\ND\da[1]{A^{(#1)}}
\ND\dca[1]{\cA ^{(#1)}}
\ND\dz[1]{Z^{(#1)}}
\ND\Z{\mathcal{Z}}

\ND\Ga{\Gamma}

\ND\tr{\mathbb{T}}

\ND\ct{\mathcal{T}}

\ND\id{\mathrm{Id}}

\ND\cS{\mathcal{S}}

\ND\h{\mathfrak{h}}

\ND{\rmid}[1]{\mathrel{}\middle#1\mathrel{}}

\def\thefootnote{{}}

\maketitle 
\begin{abstract}
In this paper, with the help of a result by Matsumoto--Yor (2000), we prove a Girsanov-type formula for a class of anticipative transforms of Brownian motion which possesses exponential functionals as anticipating factors. Our result unifies existing formulas in earlier works. As an application, we also consider the law of Brownian motion perturbed by a positive weight of a fairly wide class, and prove its invariance under an anticipative transformation associated with the perturbation. In the course of our exploration, a disintegration formula for the Wiener measure related to exponential functionals plays a key role.
\footnote{Mathematical Institute, Tohoku University, Aoba-ku, Sendai 980-8578, Japan}
\footnote{E-mail: hariya@tohoku.ac.jp}
\footnote{{\itshape Keywords and Phrases}:~{Brownian motion}; {exponential functional}; {anticipative path transformation}}
\footnote{{\itshape MSC 2020 Subject Classifications}:~Primary~{60J65}; Secondary~{60J55}, {60G30}}
\end{abstract}

%%%%%% New section %%%%%%
\section{Introduction and main results}\label{;intro}
A Girsanov-type formula, or a change of measure formula, 
for anticipative transforms of Brownian motion has been studied 
by a number of authors, especially in the framework of 
Malliavin calculus; see, e.g., \cite{buc,kus,ram,uz94,yano,zz} and references therein. 
In the formula, the density with respect to the underlying 
Wiener measure is given by the product of two factors, 
one of which is a stochastic exponential in which the It\^o 
integral is replaced by the Skorokhod integral, and the other is 
a Carleman--Fredholm determinant; in some specific settings, 
further factorizations of these two factors have 
also been investigated. As far as we know, there seem to be 
not so many concrete examples in which the corresponding 
densities, in particular, Carleman--Fredholm determinants, 
are explicitly calculated.

In this paper, with the help of a result by Matsumoto--Yor \cite{myPI} 
on exponential functionals of Brownian motion, we introduce 
a class of anticipative transformations under which we can obtain 
a Girsanov-type formula in an explicit form; 
we expect that the result provides a number of examples in 
Malliavin calculus in which we are able to calculate 
Carleman--Fredholm determinants explicitly. We also apply 
it to derive the distributional invariance of Brownian motion 
perturbed by a positive weight of a wide class, which is 
described in terms of an anticipative transformation determined 
from the perturbation. As will be seen below, properties of 
a certain class of anticipative transformations investigated in 
\cite{har22}, as well as a disintegration formula for the Wiener 
measure, play an essential role in the course of our exploration; 
see \lref{;lttrans} and \pref{;pdisint}.

To state the main results of the paper, we prepare some of the 
notation. Let $B=\{ B_{t}\} _{t\ge 0}$ be a one-dimensional standard 
Brownian motion. Let $C([0,\infty );\R )$ be the space of 
continuous functions $\phi :[0,\infty )\to \R $, on which we 
define the transformation
\begin{align}
 A_{t}(\phi ):=\int _{0}^{t}e^{2\phi _{s}}\,ds,\quad t\ge 0; \label{;defa}
\end{align}
with slight abuse of notation, we will simply write $A_{t}$ for 
$A_{t}(B)$: $A_{t}=A_{t}(B)$. This exponential additive 
functional $A_{t},\,t\ge 0$, which is the quadratic variation of 
the geometric Brownian motion $e^{B_{t}},\,t\ge 0$, appears 
in a number of areas in probability theory such as mathematical 
finance and diffusion processes in random environments, and 
is known for its close relationship with planar Brownian motion 
(or two-dimensional Bessel process); see the detailed surveys 
\cite{mySI, mySII} by Matsumoto and Yor. 
Following the notation in \cite{dmy}, we also define 
\begin{align}\label{;defz}
 Z_{t}(\phi ):=e^{-\phi _{t}}A_{t}(\phi ),\quad t\ge 0,
\end{align}
for $\phi \in C([0,\infty );\R )$, and denote $Z_{t}(B)$ by 
$Z_{t}$ for simplicity, too. Given $t>0$, we restrict the 
transformation $A$ to the space $C([0,t];\R )$ of real-valued 
continuous functions over $[0,t]$, and recall from \cite{har22} 
the family $\{ \tr ^{}_{z}\} _{z\in \R }$ of anticipative path 
transformations on $C([0,t];\R )$ defined by 
\begin{align}\label{;ttrans}
 \tr _{z}(\phi )(s)\equiv \tr ^{t}_{z}(\phi )(s):=\phi _{s}-\log \left\{ 
 1+\frac{A_{s}(\phi )}{A_{t}(\phi )}\left( e^{z}-1\right) 
 \right\} ,\quad 0\le s\le t, 
\end{align}
for $\phi \in C([0,t];\R )$. In what follows, with $t>0$ fixed, 
we suppress the superscript $t$ from the notation and 
suppose that each $\tr _{z}$ acts on $C([0,t];\R )$. 
We denote by $C([0,t];\R ^{2})$ the space of $\R ^{2}$-valued 
continuous functions over $[0,t]$. One of the main results 
of the paper is then stated as 
\begin{thm}\label{;tmain}
 Let $\h \equiv \h (t,\,\cdot \,):C([0,t];\R )\to \R $ be a measurable 
 function such that, for every $\phi \in C([0,t];\R )$, the function 
 $\h _{\phi }:\R \to \R $ defined by 
 \begin{align*}
  \h _{\phi }(\xi ):=\h \bigl( 
  \tr _{\phi _{t}-\xi }(\phi )
  \bigr) ,\quad \xi \in \R , 
 \end{align*}
 is of class $C^{1}$ and strictly monotone. Then, for every nonnegative measurable function $F$ on 
 $C([0,t];\R ^{2})$, we have 
 \begin{equation}\label{;qtmain}
  \begin{split}
  &\ex \biggl[ 
  F\bigl( 
  \tr _{B_{t}-\h (B)}(B),B
  \bigr) \exp \left\{ 
  -\frac{\cosh \h (B)}{Z_{t}}+\frac{\cosh B_{t}}{Z_{t}}
  \right\} \left| \h _{B}'(B_{t})\right| \biggr] \\
  &=\ex \!\left[ 
  F\bigl( 
  B,\tr _{B_{t}-\h _{B}^{-1}(B_{t})}(B)
  \bigr) ;\,B_{t}\in \h _{B}(\R )
  \right] ,
 \end{split}
 \end{equation}
 where, for every $\phi \in C([0,t];\R )$, $\h _{\phi }^{-1}$ denotes the 
 inverse function of $\h _{\phi }$.
\end{thm}

In the above statement, as well as in the sequel, 
we equip $C([0,t];\R )$ with topology 
of uniform convergence, and we say that a real-valued 
function on this space is measurable if it is 
Borel-measurable with respect to the topology. 
The same remark also applies to $C([0,t];\R ^{2})$. 

As listed below, existing formulas may be obtained from 
the above theorem with particular choices of the function 
$\h (\phi ),\,\phi \in C([0,t];\R )$: 
given $\al ,x\ge 0$ and $z\in \R $, 
\begin{itemize}
 \item \cite[Theorem~1.1]{dmy} corresponds to 
 $\h (\phi )=\phi _{t}-\log \left\{ 1+\al Z_{t}(\phi )\right\} $;
 
 \item \cite[Theorem~1.5]{dmy} corresponds to 
 $\h (\phi )=-\log \left\{ e^{-\phi _{t}}+\al Z_{t}(\phi )\right\} $;
 
 \item \cite[Theorem~1.2]{har22} corresponds to 
 $\h (\phi )=\phi _{t}-z$;
 
 \item \cite[Theorem~1.1]{har22a} corresponds to 
 $\h (\phi )=-\phi _{t}$;
 
 \item the former relation in \cite[Proposition~5.3]{har22a} 
 corresponds to 
 \begin{align*}
  \h (\phi )=\log \left\{ e^{-\phi _{t}}+2xZ_{t}(\phi )\right\} ;
 \end{align*}
 
 \item the latter relation in \cite[Proposition~5.3]{har22a} 
 corresponds to 
 \begin{align*}
  \h (\phi )=-\log \left\{ e^{\phi _{t}}+2xZ_{t}(\phi )\right\} .
 \end{align*}
\end{itemize}
In fact, the first two formulas are recovered with slight 
generalization. We also remark that the path transformation 
corresponding to the second case is a non-anticipative one, 
but the resulting formula is not the one that follows from 
Girsanov's formula; details will be found in \ssref{;sset}.

As an application of \tref{;tmain}, we also prove 

\begin{cor}\label{;cmain}
 Let $\La \equiv \La (t,\,\cdot \,)$ be a positive 
 continuous function on $C([0,t];\R )$ such that 
 \begin{align}\label{;cal}
  \int _{\R }d\xi \,\La 
  \bigl( \tr _{\phi _{t}-\xi }(\phi )\bigr) 
  \exp \left\{ 
  -\frac{\cosh \xi }{Z_{t}(\phi )}
  \right\} <\infty \quad \text{for all $\phi \in C([0,t];\R )$}.
 \end{align}
 Then, for every nonnegative measurable function $F$ on 
 $C([0,t];\R ^{2})$, we have 
 \begin{align}\label{;qcmain}
  \ex \!\left[ 
  F\bigl( 
  \tr _{B_{t}-h_{\La }(B_{t},B)}(B),B
  \bigr) \La (B)
  \right] 
  =\ex \!\left[ 
  F\bigl( B,\tr _{B_{t}-h_{\La }(B_{t},B)}(B)\bigr) \La (B)
  \right] ,
 \end{align}
 where $h_{\La }\equiv h_{\La }(t,\,\cdot \,,\,\cdot \,)$ is defined through
 \begin{equation}\label{;dhal}
 \begin{split}
  &\int _{-\infty }^{h_{\La }(\xi ,\phi )}dx\,
  \La \bigl( \tr _{\phi _{t}-x}(\phi )\bigr) 
  \exp \left\{ 
  -\frac{\cosh x}{Z_{t}(\phi )}
  \right\} \\
  &=\int _{\xi }^{\infty }dx\,
  \La \bigl( \tr _{\phi _{t}-x}(\phi )\bigr) 
  \exp \left\{ 
  -\frac{\cosh x}{Z_{t}(\phi )}
  \right\} 
 \end{split} 
 \end{equation}
 for $\xi \in \R $ and $\phi \in C([0,t];\R )$.
\end{cor}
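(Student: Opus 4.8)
The plan is to deduce Corollary~\ref{;cmain} from Theorem~\ref{;tmain} by choosing $D=\R \times (0,\infty )$ and taking for $h$ the function $h_{\La }$ defined implicitly by \eqref{;dhal}. First I would check that $h_{\La }$ is well defined: for each fixed $\zeta >0$, integrability~\eqref{;cal} guarantees that the right-hand side of \eqref{;dhal} is a finite, strictly decreasing, continuous function of $\xi$ that ranges over $(0,\int _{\R }\La (x,\zeta )e^{-\cosh x/\zeta }\,dx)$, while the left-hand side, as a function of its upper limit $h_{\La }(\xi ,\zeta )$, is a strictly increasing continuous bijection from $\R$ onto the same interval (positivity of $\La$ is used here). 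Hence $h_{\La }(\,\cdot \,,\zeta )$ exists, is uniquely determined, and is a $C^{1}$ strictly decreasing function on all of $\R $, with derivative obtained by differentiating \eqref{;dhal}:
\begin{align*}
 h_{\La }'(\xi ,\zeta )\,\La (h_{\La }(\xi ,\zeta ),\zeta )
 \exp \left( -\frac{\cosh h_{\La }(\xi ,\zeta )}{\zeta }\right)
 =-\La (\xi ,\zeta )\exp \left( -\frac{\cosh \xi }{\zeta }\right) .
\end{align*}
In particular $h_{\La }'<0$, so $|h_{\La }'(\xi ,\zeta )|=-h_{\La }'(\xi ,\zeta )$, and the map $\varPhi _{h_{\La }}:(\xi ,\zeta )\mapsto (h_{\La }(\xi ,\zeta ),\zeta )$ is injective. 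I would also record the key involutive property: applying \eqref{;dhal} twice shows $h_{\La }(h_{\La }(\xi ,\zeta ),\zeta )=\xi $, i.e.\ $h_{\La }^{-1}=h_{\La }$ in the first variable, and consequently $\varPhi _{h_{\La }}(D)=D=\R \times (0,\infty )$.

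Next I would apply Theorem~\ref{;tmain} with $h=h_{\La }$ and with the test functional $F$ replaced by $\widetilde{F}(\phi ,\psi ):=F(\phi ,\psi )\,G(\psi _{t},Z_{t}(\psi ))$ for a suitable auxiliary nonnegative measurable $G$ on $\R \times (0,\infty )$ to be chosen so that the exponential and Jacobian factors on the left-hand side of \eqref{;qtmain} are absorbed. Observe that on the event in question the second argument of $F$ in \eqref{;qtmain} is $B$ itself, so $(\psi _{t},Z_{t}(\psi ))=(B_{t},Z_{t})$ there; thus the left-hand side of \eqref{;qtmain} becomes
\begin{align*}
 \ex \!\left[ F\bigl( \tr _{B_{t}-h_{\La }(B_{t},Z_{t})}(B),B\bigr)
 G(B_{t},Z_{t})\,\exp \left\{ -\frac{\cosh h_{\La }(B_{t},Z_{t})}{Z_{t}}
 +\frac{\cosh B_{t}}{Z_{t}}\right\} |h_{\La }'(B_{t},Z_{t})|\right] .
\end{align*}
Using the derivative identity above, the product $G(\xi ,\zeta )e^{-\cosh h_{\La }(\xi ,\zeta )/\zeta }|h_{\La }'(\xi ,\zeta )|$ equals $G(\xi ,\zeta )\,\La (\xi ,\zeta )e^{-\cosh \xi /\zeta }/\La (h_{\La }(\xi ,\zeta ),\zeta )$, so the left-hand side simplifies to
\begin{align*}
 \ex \!\left[ F\bigl( \tr _{B_{t}-h_{\La }(B_{t},Z_{t})}(B),B\bigr)
 \frac{G(B_{t},Z_{t})\,\La (B_{t},Z_{t})}{\La (h_{\La }(B_{t},Z_{t}),Z_{t})}\right] .
\end{align*}
Choosing $G(\xi ,\zeta ):=\La (h_{\La }(\xi ,\zeta ),\zeta )$, which is legitimate since it is a positive continuous function, the ratio collapses to $\La (B_{t},Z_{t})$, giving exactly the left-hand side of \eqref{;qcmain}.

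It remains to identify the right-hand side. With $F$ replaced by $\widetilde F$ and using $h_{\La }^{-1}=h_{\La }$, $\varPhi _{h_{\La }}(D)=\R \times (0,\infty )$, the right-hand side of \eqref{;qtmain} reads
\begin{align*}
 \ex \!\left[ \widetilde F\bigl( B,\tr _{B_{t}-h_{\La }(B_{t},Z_{t})}(B)\bigr)\right]
 =\ex \!\left[ F\bigl( B,\tr _{B_{t}-h_{\La }(B_{t},Z_{t})}(B)\bigr)\,
 G\bigl( \psi _{t},Z_{t}(\psi )\bigr)\right] ,
\end{align*}
where now $\psi =\tr _{B_{t}-h_{\La }(B_{t},Z_{t})}(B)$. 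Here I would invoke the elementary but crucial fact, which follows directly from the definition \eqref{;ttrans} of $\tr _{z}$ together with \eqref{;defa}--\eqref{;defz} (and is already used implicitly in the proof of Theorem~\ref{;tmain}), that $\tr _{z}(\phi )_{t}=\phi _{t}-z$ and that $Z_{t}(\tr _{z}(\phi ))=Z_{t}(\phi )$; applying this with $z=B_{t}-h_{\La }(B_{t},Z_{t})$ gives $\psi _{t}=h_{\La }(B_{t},Z_{t})$ and $Z_{t}(\psi )=Z_{t}$. Hence $G(\psi _{t},Z_{t}(\psi ))=\La (h_{\La }(h_{\La }(B_{t},Z_{t}),Z_{t}),Z_{t})=\La (B_{t},Z_{t})$ by the involution property, and the right-hand side becomes the right-hand side of \eqref{;qcmain}. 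The main obstacle, and the only genuinely delicate point, is verifying that $h_{\La }$ satisfies all the hypotheses of Theorem~\ref{;tmain}—well-definedness, $C^{1}$ regularity in $\xi$, injectivity of $\varPhi _{h_{\La }}$, and the self-inverse property making $\varPhi _{h_{\La }}(D)$ the full space—everything else is a matter of bookkeeping with the two identities $\tr$ preserves $Z_{t}$ and shifts the endpoint. I would also need a brief remark that $\La $ continuous and positive with \eqref{;cal} suffices for all integrals and expectations appearing to be finite or at least well defined in $[0,\infty ]$, which is automatic by monotone convergence since every integrand is nonnegative.
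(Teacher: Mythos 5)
Your proposal is correct and follows essentially the same route as the paper: apply Theorem \ref{;tmain} with $h=h_{\La }$ on $D=\R \times (0,\infty )$, use the self-inverse property $h_{\La }^{-1}=h_{\La }$ and the derivative identity obtained by differentiating \eqref{;dhal} to absorb the exponential and Jacobian factors, and invoke Lemma \ref{;lttrans}\thetag{i},\thetag{iii} to evaluate the endpoint and $Z_{t}$ of the transformed path. The only (immaterial) difference is that you attach the auxiliary weight $G(\psi _{t},Z_{t}(\psi ))$ with $G=\La (h_{\La }(\,\cdot \,),\,\cdot \,)$ to the second argument of $F$, whereas the paper attaches $\La (\phi ^{1}_{t},Z_{t}(\phi ^{1}))$ to the first; by the involution these coincide.
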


Observe that, for every fixed $\phi \in C([0,t];\R )$, by \eqref{;cal} and 
the positivity of $\La $, the function $h_{\La }(\,\cdot \,,\phi )$ 
is strictly decreasing and satisfies 
\begin{align*}
 \lim _{\xi \to -\infty }h_{\La }(\xi ,\phi )=\infty , && 
 \lim _{\xi \to \infty }h_{\La }(\xi ,\phi )=-\infty .
\end{align*}
If we denote by $\ct _{\La }$ the path transformation 
as in \eqref{;qcmain}, namely, if we set 
\begin{align}\label{;dctal}
 \ct _{\La }(\phi )(s):=\tr _{\phi _{t}-h_{\La }(\phi _{t},\phi )}(\phi )(s),
 \quad 0\le s\le t,
\end{align}
for $\phi \in C([0,t];\R )$, then the above corollary also 
indicates that $\ct _{\La }$ is an involution: 
$\ct _{\La }\circ \ct _{\La }=\id $, which is indeed the case 
as will be noted in \rref{;rinvol2}. Here $\id $ is the 
identity map on $C([0,t];\R )$. 

For every $\mu \in \R $, we denote by 
$\db{\mu }=\bigl\{ \db{\mu }_{s}:=B_{s}+\mu s\bigr\} _{s\ge 0}$ 
the Brownian motion with drift $\mu $, to which we will also 
associate the two processes $\bigl\{ \da{\mu }_{s}\bigr\} _{s\ge 0}$ 
and $\bigl\{ \dz{\mu }_{s}\bigr\} _{s\ge 0}$ in such a way that  
\begin{align*}
 \da{\mu }_{s}:=A_{s}(\db{\mu }), && 
 \dz{\mu }_{s}:=Z_{s}(\db{\mu });
\end{align*}
when $\mu =0$, we suppress it from the notation as 
already introduced above. If we apply 
\cref{;cmain} to the function $\La $ of the form 
\begin{align*}
 \La (\phi )=\exp \left( 
 \mu \phi _{t}-\frac{\mu ^{2}}{2}t
 \right) , \quad \phi \in C([0,t];\R ),
\end{align*}
then, by the Cameron--Martin formula, we see that the law of 
$\bigl\{ \db{\mu }_{s}\bigr\} _{0\le s\le t}$ is 
invariant under $\ct _{\La }$ with the above choice of 
$\La $, which extends \cite[Theorem~1.1]{har22a} 
to the case of Brownian motion with drift; see \ssref{;ssec} 
for more details as well as other examples.

The rest of the paper is organized as follows. 
In \sref{;spr}, we state and prove the two key assertions, 
namely \lref{;lttrans} and \pref{;pdisint} as mentioned earlier, 
with which we prove \tref{;tmain} and \cref{;cmain} in \sref{;sprf}. 
We devote \sref{;se} to examples that are obtained 
by applying \tref{;tmain} and \cref{;cmain}.

%%%%%%%%% New Section %%%%%%%%% 
\section{Preliminaries}\label{;spr}
In this section, we state and prove \lref{;lttrans} and \pref{;pdisint}. 
We keep $t>0$ fixed and begin with properties of 
$\tr _{z},\,z\in \R $, investigated in \cite{har22}.  
Observe that the two transformations $A$ and $Z$ 
defined respectively by \eqref{;defa} and \eqref{;defz} 
are related via 
\begin{align}\label{;deria}
 \frac{d}{ds}\frac{1}{A_{s}(\phi )}
 =-\left\{ \frac{1}{Z_{s}(\phi )}\right\} ^{2},\quad s>0,
 \ \phi \in C([0,\infty );\R ),
\end{align}
and hence, restricted to $C([0,t];\R )$,
\begin{align}\label{;deriad}
 \frac{1}{A_{s}(\phi )}
 =\int _{s}^{t}\frac{du}{
 \left\{ Z_{u}(\phi )\right\} ^{2}}
 +\frac{e^{-\phi _{t}}}{Z_{t}(\phi )},\quad 0<s\le t, 
\end{align}
for every $\phi \in C([0,t];\R )$, because of the 
relation $A_{t}(\phi )=e^{\phi _{t}}Z_{t}(\phi )$.

\begin{lem}[{\cite[Proposition~2.1]{har22}}]\label{;lttrans}
The transformations $\tr _{z},\,z\in \R $, have the following properties. 
\begin{itemize}
\item[\thetag{i}] For every $z\in \R $ and $\phi \in C([0,t];\R )$,
$\tr _{z}(\phi )(t)=\phi _{t}-z$.

\item[\thetag{ii}] For every $z\in \R $ and $\phi \in C([0,t];\R )$, 
\begin{align}\label{;arecipr}
 \frac{1}{A_{s}(\tr _{z}(\phi ))}=\frac{1}{A_{s}(\phi )}
 +\frac{e^{z}-1}{A_{t}(\phi )},\quad 0<s\le t;
\end{align}
in particular, $A_{t}(\tr _{z}(\phi ))=e^{-z}A_{t}(\phi )$.

\item[\thetag{iii}] $Z\circ \tr _{z}=Z$ for any $z\in \R $.

\item[\thetag{iv}] (Semigroup property) 
$\tr _{z}\circ \tr _{z'}=\tr _{z+z'}$ for any $z,z'\in \R $; in particular, 
\begin{align*}
 \tr _{z}\circ \tr _{-z}=\tr _{0}=\id \quad \text{for any $z\in \R $},
\end{align*}
where $\id $ is the identity map on $C([0,t];\R )$ as referred to 
in \sref{;intro}.
\end{itemize}
\end{lem}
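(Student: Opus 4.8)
\noindent\emph{Proof proposal.}
The plan is to verify all four properties by direct computation from the definition \eqref{;ttrans}, the only tool being the identity $\frac{d}{ds}A_{s}(\phi )=e^{2\phi _{s}}$. Since $\tr _{0}=\id $, for which all four assertions are trivial, I would fix $z\neq 0$ and abbreviate $c\equiv c(z,\phi ):=\bigl( e^{z}-1\bigr) /A_{t}(\phi )$, so that \eqref{;ttrans} reads $\tr _{z}(\phi )(s)=\phi _{s}-\log \bigl( 1+cA_{s}(\phi )\bigr) $; since $0\le A_{s}(\phi )\le A_{t}(\phi )$ one checks $1+cA_{s}(\phi )\ge \min \{ 1,e^{z}\} >0$, so the transformation is well defined.

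Assertion (i) is immediate: taking $s=t$ gives $1+cA_{t}(\phi )=e^{z}$, whence $\tr _{z}(\phi )(t)=\phi _{t}-z$. For (ii), I would first compute $A_{s}(\tr _{z}(\phi ))$. From the rewriting above, $e^{2\tr _{z}(\phi )(u)}=e^{2\phi _{u}}\bigl( 1+cA_{u}(\phi )\bigr) ^{-2}$; performing the substitution $v=A_{u}(\phi )$ (so $dv=e^{2\phi _{u}}\,du$) yields
\begin{align*}
 A_{s}(\tr _{z}(\phi ))=\int _{0}^{A_{s}(\phi )}\frac{dv}{(1+cv)^{2}}
 =\frac{1}{c}\left( 1-\frac{1}{1+cA_{s}(\phi )}\right)
 =\frac{A_{s}(\phi )}{1+cA_{s}(\phi )}.
\end{align*}
Taking reciprocals gives $1/A_{s}(\tr _{z}(\phi ))=1/A_{s}(\phi )+c$, which is \eqref{;arecipr}; the case $s=t$ gives $A_{t}(\tr _{z}(\phi ))=e^{-z}A_{t}(\phi )$. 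For (iii), I combine the displayed formula for $A_{s}(\tr _{z}(\phi ))$ with $e^{-\tr _{z}(\phi )(s)}=e^{-\phi _{s}}\bigl( 1+cA_{s}(\phi )\bigr) $: the factor $1+cA_{s}(\phi )$ cancels and we are left with $Z_{s}(\tr _{z}(\phi ))=e^{-\phi _{s}}A_{s}(\phi )=Z_{s}(\phi )$.

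Finally, for the semigroup property (iv) I would compute $\tr _{z}\bigl( \tr _{z'}(\phi )\bigr) (s)$ by substituting into \eqref{;ttrans} the quantities already obtained: with $c':=\bigl( e^{z'}-1\bigr) /A_{t}(\phi )$ we have $\tr _{z'}(\phi )(s)=\phi _{s}-\log \bigl( 1+c'A_{s}(\phi )\bigr) $, $A_{t}(\tr _{z'}(\phi ))=e^{-z'}A_{t}(\phi )$, and $A_{s}(\tr _{z'}(\phi ))=A_{s}(\phi )/\bigl( 1+c'A_{s}(\phi )\bigr) $ by (ii). Inserting these, the argument of the outer logarithm becomes $\bigl( 1+c'A_{s}(\phi )\bigr) ^{-1}$ times $\bigl( A_{t}(\phi )+A_{s}(\phi )(e^{z+z'}-1)\bigr) /A_{t}(\phi )$, after using the elementary identity $\bigl( e^{z'}-1\bigr) +e^{z'}\bigl( e^{z}-1\bigr) =e^{z+z'}-1$; consequently the two logarithmic contributions $\log \bigl( 1+c'A_{s}(\phi )\bigr) $ cancel and one is left with $\phi _{s}-\log \bigl( 1+A_{s}(\phi )(e^{z+z'}-1)/A_{t}(\phi )\bigr) =\tr _{z+z'}(\phi )(s)$. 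The particular case $\tr _{z}\circ \tr _{-z}=\tr _{0}=\id $ follows at once.

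All of this is elementary; there is no genuine obstacle. The only points requiring mild care are the separate (trivial) treatment of $z=0$, so as not to divide by $e^{z}-1$, and keeping track, in step (iv), of the cancellation of the spurious logarithmic terms — which is precisely why it is convenient to establish (ii) first and then feed it into (iv).
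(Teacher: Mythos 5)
Your proposal is correct throughout; parts \thetag{i} and \thetag{ii} coincide with the paper's own computation (the paper evaluates the same integral $\int_0^s e^{2\phi_u}(1+cA_u(\phi))^{-2}du$ in closed form rather than via the substitution $v=A_u(\phi)$, but that is cosmetic). Where you genuinely diverge is in \thetag{iii} and \thetag{iv}. For \thetag{iii} the paper differentiates both sides of \eqref{;arecipr} in $s$ and invokes the relation \eqref{;deria} to get $\{Z_s(\tr_z(\phi))\}^{-2}=\{Z_s(\phi)\}^{-2}$, whereas you verify $Z_s(\tr_z(\phi))=Z_s(\phi)$ by direct cancellation of the factor $1+cA_s(\phi)$ between $e^{-\tr_z(\phi)(s)}$ and $A_s(\tr_z(\phi))$ --- more elementary and equally valid. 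For \thetag{iv} the paper reduces, via \eqref{;rel}, to the identity $A_s((\tr_z\circ\tr_{z'})(\phi))=A_s(\tr_{z+z'}(\phi))$ and proves it by iterating the reciprocal representation \eqref{;deriaz} (which rewrites \eqref{;arecipr} using \eqref{;deriad}) together with \thetag{i} and \thetag{iii}; you instead substitute the explicit formulas into the definition \eqref{;ttrans} and track the cancellation of the logarithms using $(e^{z'}-1)+e^{z'}(e^{z}-1)=e^{z+z'}-1$, which I have checked and which closes correctly. The trade-off: your route is self-contained brute force, while the paper's route sets up the representation $1/A_s(\tr_z(\phi))=\int_s^t Z_u(\phi)^{-2}du+e^{-\phi_t+z}/Z_t(\phi)$, which it reuses verbatim in the proof of Theorem \ref{;tmain}, so the extra structure is not wasted there. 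One small point of care in your version: in step \thetag{iv} you should note explicitly that the cases $z=0$ or $z'=0$ are covered by $\tr_0=\id$, since your formulas for $c$ and $c'$ presuppose nonzero parameters; you do flag this, so there is no gap.
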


We give below a proof of the above lemma for the reader's convenience.

\begin{proof}[Proof of \lref{;lttrans}]
\thetag{i} By definition, 
\begin{align*}
 \tr _{z}(\phi )(t)&=\phi _{t}-\log \left\{ 
 1+(e^{z}-1)
 \right\} \\
 &=\phi _{t}-z.
\end{align*}

\thetag{ii} For the case $z=0$ is obvious, we let $z\neq 0$ 
and compute, for every $0\le s\le t$, 
\begin{align*}
 A_{s}(\tr _{z}(\phi ))
 &=\int _{0}^{s}du\,\frac{e^{2\phi _{u}}}{\left\{ 
 1+\frac{A_{u}(\phi )}{A_{t}(\phi )}(e^{z}-1)
 \right\} ^{2}}\\
 &=\frac{A_{t}(\phi )}{e^{z}-1}\left\{ 
 1-\frac{1}{1+\frac{A_{s}(\phi )}{A_{t}(\phi )}(e^{z}-1)}
 \right\} \\
 &=\frac{A_{s}(\phi )}{1+\frac{A_{s}(\phi )}{A_{t}(\phi )}(e^{z}-1)}, 
\end{align*}
which entails \eqref{;arecipr}.

\thetag{iii} In view of relation~\eqref{;deria}, taking the derivative 
with respect to $s$ on each side of \eqref{;arecipr} yields 
\begin{align*}
 \left\{ Z_{s}(\tr _{z}(\phi ))\right\} ^{-2}
 =\left\{ Z_{s}(\phi )\right\} ^{-2},\quad 0<s\le t,
\end{align*}
from which the claim follows by the positivity of $Z$.

\thetag{iv} By noting that 
\begin{align*}
 \phi _{s}=\frac{1}{2}\log \frac{d}{ds}A_{s}(\phi ),\quad s>0,
 \ \phi \in C([0,\infty );\R ), 
\end{align*}
it suffices to show that, for each 
$\phi \in C([0,t];\R )$, 
\begin{align}\label{;epp1}
 A_{s}\bigl( (\tr _{z}\circ \tr _{z'})(\phi )\bigr)  
 =A_{s}(\tr _{z+z'}(\phi )),\quad 0\le s\le t.
\end{align}
To this end, first observe that relation~\eqref{;arecipr} 
may be rewritten as 
\begin{align}\label{;deriaz}
 \frac{1}{A_{s}(\tr _{z}(\phi ))}
 =\int _{s}^{t}\frac{du}{
 \left\{ Z_{u}(\phi )\right\} ^{2}}
 +\frac{e^{-\phi _{t}+z}}{Z_{t}(\phi )},\quad 0<s\le t,
\end{align}
thanks to \eqref{;deriad}.  
For every $0<s\le t$, we apply property~\thetag{iii} 
to \eqref{;deriad} successively to see that 
\begin{align*}
 \frac{1}{A_{s}\bigl( (\tr _{z}\circ \tr _{z'})(\phi )\bigr) }
 &=\int _{s}^{t}\frac{du}{\left\{ Z_{u}(\phi )\right\} ^{2}}
 +\frac{e^{-(\tr _{z}\circ \tr _{z'})(\phi )(t)}}{Z_{t}(\phi )}, 
\end{align*}
in which repeated use of property~\thetag{i} yields 
\begin{align*}
 -(\tr _{z}\circ \tr _{z'})(\phi )(t)&=-\tr _{z'}(\phi )(t)+z\\
 &=-\phi _{t}+z'+z.
\end{align*}
This proves \eqref{;epp1} in view of \eqref{;deriaz}.
\end{proof}

\begin{rem}\label{;rtrev}
If we denote by $R$ the operation of time reversal on 
$C([0,t];\R )$: 
\begin{align}\label{;trev}
 R(\phi )(s):=\phi _{t-s}-\phi _{t},\quad 0\le s\le t,\ 
 \phi \in C([0,t];\R ),
\end{align}
then it also holds that 
\begin{align}\label{;comptz}
 R\circ \tr _{z}=\tr _{-z}\circ R
\end{align}
for any $z\in \R $; see 
\cite[Proposition~2.1\thetag{v}]{har22}.
\end{rem}

The following may be regarded as a disintegration formula for the 
Wiener measure on $C([0,t];\R )$ in terms of the transformations 
$\tr _{z},\,z\in \R $, which seems to be new to our knowledge, and 
is of interest in its own right.

\begin{prop}\label{;pdisint}
For every nonnegative measurable function $F$ on $C([0,t];\R )$, one has 
\begin{align}\label{;disint}
 \ex [F(B)]
 =\ex \!\left[ 
 \int _{\R }\frac{d\xi }{2K_{0}\bigl( 1/Z_{t}(\phi )\bigr) }
 F\bigl( 
 \tr _{\phi _{t}-\xi }(\phi )
 \bigr) \exp \left\{ 
 -\frac{\cosh \xi }{Z_{t}(\phi )}
 \right\} \bigg| _{\phi =B}
 \right] ,
\end{align}
where $K_{0}$ is the modified Bessel function of the third kind 
(or the Macdonald function) of order $0$.
\end{prop}

For the modified Bessel functions, refer to, e.g., 
\cite[Section~5.7]{leb}. 
The above proposition is a consequence of the 
next two lemmas, the first one of which is a restatement 
of \cite[Proposition~1.7]{myPI} in the case of Brownian motion 
without drift. We denote by $\{ \Z _{s}\} _{s\ge 0}$ the natural filtration 
of the process $Z=\{ Z_{s}\} _{s\ge 0}$, where, as mentioned 
in \sref{;intro}, $Z_{s}$ refers to $Z_{s}(B)$ for notational 
simplicity. 

\begin{lem}\label{;lcond}
For every nonnegative measurable function $f$ on $\R $, 
we have, a.s., 
\begin{align*}
 \ex \!\left[ 
 f(B_{t})\rmid| \Z _{t}
 \right] 
 =\int _{\R }\frac{d\xi }{2K_{0}(1/\zeta )}\,f(\xi )\exp \left( 
 -\frac{\cosh \xi }{\zeta }
 \right) \bigg| _{\zeta =Z_{t}}.
\end{align*}
\end{lem}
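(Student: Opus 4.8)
The plan is to derive \lref{;lcond} from the Matsumoto--Yor result \cite[Proposition~1.7]{myPI}, which gives the conditional law of $\db{\mu}_t$ given the $\sigma$-field generated by $\da{\mu}_t$ (equivalently, by the path of $\dz{\mu}$ up to time $t$) for Brownian motion with a general drift $\mu$; here we only need the driftless case $\mu=0$. First I would recall precisely what that proposition asserts: for $\mu \in \R$, conditionally on $\da{\mu}_t = a$, the random variable $e^{-\db{\mu}_t}$ (or $\db{\mu}_t$ itself) has a density proportional to an expression involving the Macdonald function, and in the $\mu=0$ case this collapses to the stated formula. The key point to check is that the conditioning $\sigma$-field $\Z_t$ in our notation coincides, up to null sets, with $\sigma(A_t)$, or at least that conditioning on $\Z_t$ gives the same answer as conditioning on $A_t$ alone.

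The main steps would be: (1) State the Matsumoto--Yor formula in the driftless case, namely that $\ex[f(B_t) \mid \sigma(A_t)] = \int_{\R} f(\xi)\, p(\xi \mid A_t)\, d\xi$ where, writing $a = A_t$ and using $A_t = e^{B_t} Z_t$ so that $1/Z_t$ appears naturally, the density in the variable $\xi = B_t$ works out to $\frac{1}{2K_0(1/\zeta)} \exp(-\cosh\xi / \zeta)$ with $\zeta = Z_t$. This requires a change of variables: the Matsumoto--Yor density is typically written for $e^{-B_t}$ given $A_t$, of the form $C(a) \exp\{-\frac{1}{2a}(e^{-2B_t} + \text{something})\}$ or, after the substitution, involving $\cosh$; I would carry out the elementary manipulation $\frac{1}{2a}(u + 1/u)$-type algebra with $u = e^{B_t}$ and $a = e^{B_t} Z_t$ to land exactly on $\cosh\xi / Z_t$, and identify the normalizing constant as $2K_0(1/Z_t)$ using the integral representation $K_0(x) = \frac{1}{2}\int_{\R} e^{-x\cosh\xi}\, d\xi$. (2) Upgrade the conditioning from $\sigma(A_t)$ to $\Z_t$: since $A_t$ is $\Z_t$-measurable (indeed $A_t = e^{B_t} Z_t$ is not obviously $\Z_t$-measurable directly, but $1/A_s$ is recovered from $\{Z_u\}_{s \le u \le t}$ together with $e^{-B_t}/Z_t$ via \eqref{;deriad}, and $e^{-B_t}$ is determined by the path of $Z$ near $t$ through \eqref{;rel} and \eqref{;defz}), one checks $\sigma(A_t) \subset \Z_t$; conversely, the right-hand side of the claimed identity is $\sigma(Z_t)$-measurable hence $\Z_t$-measurable, and to conclude it equals $\ex[f(B_t)\mid \Z_t]$ it suffices to know that $\ex[f(B_t)\mid \Z_t]$ is already $\sigma(A_t)$-measurable, equivalently $\sigma(Z_t)$-measurable. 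This in turn follows from the Markov-type property in \cite{myPI} that the conditional law of $B_t$ given the whole path $\{Z_u\}_{0 \le u \le t}$ depends only on $Z_t$ (or on $A_t$).

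I expect the main obstacle to be precisely step (2): carefully matching the conditioning $\sigma$-fields. The subtlety is that a priori $\ex[f(B_t)\mid \Z_t]$ could depend on more of the $Z$-path than just $Z_t$, and one must invoke the correct statement from \cite{myPI} --- which is really a statement about $\da{\mu}_t$ given $\sigma\{\da{\mu}_s : s \le t\}$ or about the Markov property of the pair process --- to collapse this dependence down to $Z_t$ alone. Step (1), the change of variables and identification of the constant via the integral representation of $K_0$, is routine once the Matsumoto--Yor density is written down correctly, so the bulk of the proof is bookkeeping; I would present it as: quote \cite[Proposition~1.7]{myPI} with $\mu=0$, perform the substitution $\xi \mapsto$ (the exponential functional variables), and note the normalization, then add a short paragraph reconciling $\Z_t$ with $\sigma(Z_t)$ using \eqref{;deriad}.
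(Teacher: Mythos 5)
Your overall strategy---quote \cite[Proposition~1.7]{myPI} in the driftless case and convert its generalized inverse Gaussian density for $e^{-B_{t}}$ into the stated density for $B_{t}$ via the substitution $u=e^{-\xi }$, $\tfrac12 (u+u^{-1})=\cosh \xi $, identifying the normalization through $2K_{0}(1/\zeta )=\int _{\R }e^{-\cosh \xi /\zeta }\,d\xi $---is exactly what the paper does: the lemma is presented there as a plain restatement of that proposition, with no further argument. However, your step~(2) contains a genuine error. You claim that $e^{-B_{t}}$ is determined by the path of $Z$ near $t$ via \eqref{;rel} and \eqref{;defz}, hence that $\sigma (A_{t})\subset \Z _{t}$. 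This is false, and it contradicts the very statement you are proving: if $B_{t}$ were $\Z _{t}$-measurable, the left-hand side of the lemma would equal $f(B_{t})$ a.s.\ rather than an integral against a nondegenerate density. Relation \eqref{;rel} recovers $\phi $ from $A(\phi )$, not from $Z(\phi )$; from the $Z$-path one only obtains $1/A_{s}-1/A_{t}=\int _{s}^{t}Z_{u}^{-2}\,du$, which leaves $A_{t}$ (equivalently $B_{t}$) undetermined. The entire content of the Matsumoto--Yor theorem is that $\Z _{t}$ is strictly coarser than the Brownian filtration and that the conditional law of $e^{-B_{t}}$ given $\Z _{t}$ is a nondegenerate GIG law.

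Relatedly, your step~(1) conditions on the wrong $\sigma $-field: $\Z _{t}$ does not coincide with $\sigma (A_{t})$ up to null sets, and the conditional density of $B_{t}$ given $A_{t}=a$ is not of the form $\tfrac{1}{2K_{0}(1/\zeta )}e^{-\cosh \xi /\zeta }$ with $\zeta =Z_{t}$ --- indeed $Z_{t}=e^{-B_{t}}A_{t}$ is a function of the integration variable $\xi $ and so cannot parametrize that conditional law; the density given $A_{t}$ alone involves the Hartman--Watson function and is not of this clean form. The clean $\cosh $ density arises precisely when one conditions on the full path $\sigma $-field $\Z _{t}=\sigma (Z_{s},\,s\le t)$, and \cite[Proposition~1.7]{myPI} is already stated with exactly that conditioning. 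So no reconciliation of $\sigma $-fields is needed at all: the only work is the change of variables in your step~(1), carried out for the correct conditioning, and your step~(2) should be deleted rather than repaired.
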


Notice that, by the nonnegativity of $f$, 
the conditional expectation on the left-hand side in the above 
equation is well-defined in view of the conditional monotone 
convergence theorem, regardless of whether $f(B_{t})$ 
is integrable or not; this convention will also be applied later.

\begin{lem}\label{;lexpress}
For every $\phi \in C([0,t];\R )$ and $\xi \in \R $, it holds that 
\begin{align}
 \phi _{s}&=-\log\left\{ 
 Z_{s}(\phi )\int _{s}^{t}\frac{du}{\left( 
 Z_{u}(\phi )
 \right) ^{2}}+\frac{Z_{s}(\phi )}{Z_{t}(\phi )}e^{-\phi _{t}}
 \right\} ,\label{;iipd}\\
 \tr _{\phi _{t}-\xi }(\phi )(s)&=-\log\left\{ 
 Z_{s}(\phi )\int _{s}^{t}\frac{du}{\left( 
 Z_{u}(\phi )
 \right) ^{2}}+\frac{Z_{s}(\phi )}{Z_{t}(\phi )}e^{-\xi }
 \right\} ,\label{;iipx}
\end{align}
for all $0<s\le t$.
\end{lem}

\begin{rem}\label{;rinit}
By tending $s\downarrow 0$, both of the right-hand sides of 
\eqref{;iipd} and \eqref{;iipx} converge to $\phi _{0}$ by the continuity of 
$\phi $ and $\tr _{\phi _{t}-\xi }(\phi )$; note that 
$\tr _{\phi _{t}-\xi }(\phi )(0)=\phi _{0}$ by definition. 
\end{rem}

The former relation in \lref{;lexpress} indicates that, given $\phi \in C([0,t];\R )$, 
we can reconstruct it from $Z(\phi )$ and the terminal value $\phi _{t}$. 
The latter reveals that, in the disintegration formula~\eqref{;disint}, the integrand 
with respect to $\xi $ in the right-hand side is determined only from $Z(\phi )$ 
and $\xi $. Notice that, 
for every $\phi \in C([0,t];\R )$ and $\xi \in \R $,
\begin{align}\label{;twrel}
 \tr _{\phi _{t}-\xi }(\phi )(t)=\xi ,&& 
 Z_{s}\bigl( \tr _{\phi _{t}-\xi }(\phi )\bigr) =Z_{s}(\phi )\quad \text{for all $0\le s\le t$},
\end{align}
by properties~\thetag{i} and \thetag{iii} in \lref{;lttrans}.

\begin{proof}[Proof of \lref{;lexpress}]
As for \eqref{;iipd}, by relation~\eqref{;deriad} and the definition~\eqref{;defz} of 
the transformation $Z$, 
\begin{align*}
 Z_{s}(\phi )\int _{s}^{t}\frac{du}{\left\{ 
 Z_{u}(\phi )
 \right\} ^{2}}+\frac{Z_{s}(\phi )}{Z_{t}(\phi )}e^{-\phi _{t}}
 &=\frac{Z_{s}(\phi )}{A_{s}(\phi )}\\
 &=e^{-\phi _{s}},
\end{align*}
hence the claim. As for \eqref{;iipx}, we apply \eqref{;iipd} to 
$\tr _{\phi _{t}-\xi }(\phi ) \in C([0,t];\R )$ to obtain the desired expression 
owing to \eqref{;twrel}.
\end{proof}

We are in a position to prove \pref{;pdisint}.

\begin{proof}[Proof of \pref{;pdisint}]
In view of \lref{;lexpress}, it suffices to prove the assertion in the case that 
$F$ is of the form $F(\phi )=f(\phi _{t})G(Z(\phi )),\,\phi \in C([0,t];\R )$, 
with $f$ a nonnegative measurable function on $\R $ and $G$ a nonnegative 
measurable function on $C([0,t];\R )$; then, by approximation, the formula extends 
to any nonnegative measurable function of 
$(\phi _{t},Z(\phi )),\,\phi \in C([0,t];\R )$.
To this end, by \lref{;lcond}, we have, conditionally on $\Z _{t}$, 
\begin{align*}
 \ex [F(B)]&=\ex \!\left[ 
 G(Z)\ex [f(B_{t})\rmid| \Z _{t}]
 \right] \\
 &=\ex \!\left[ 
 G(Z)\int _{\R }\frac{d\xi }{2K_{0}(1/\zeta )}\,f(\xi )\exp \left( 
 -\frac{\cosh \xi }{\zeta }
 \right) \bigg| _{\zeta =Z_{t}}
 \right] .
\end{align*}
Thanks to \eqref{;twrel}, the last expression agrees with the right-hand side of 
\eqref{;disint} because, with the above choice of $F$, 
\begin{align*}
 F\bigl( 
 \tr _{\phi _{t}-\xi }(\phi )
 \bigr) =f(\xi )G(Z(\phi ))
\end{align*}
for every $\phi \in C([0,t];\R )$ and $\xi \in \R $.
\end{proof}

%%%%%%%%% New Section %%%%%%%%% 
\section{Proofs of \tref{;tmain} and \cref{;cmain}}\label{;sprf}
In this section, we prove \tref{;tmain} and \cref{;cmain}.

\subsection{Proof of \tref{;tmain}}\label{;ssptmain}
In this subsection, we prove \tref{;tmain}.

\begin{proof}[Proof of \tref{;tmain}]
First observe that, for every $\phi \in C([0,t];\R )$ and for every 
$\xi ,z\in \R $, 
\begin{align}\label{;obs1}
 \tr _{\psi _{t}-\h (\psi )}(\psi )\big| _{\psi =\tr _{\phi _{t}-\xi }(\phi )}
 =\tr _{\phi _{t}-\h _{\phi }(\xi )}(\phi ),
\end{align}
and 
\begin{align}\label{;obs2}
 \h _{\tr _{z}(\phi )}(\xi )=\h _{\phi }(\xi ).
\end{align}
As for \eqref{;obs1}, by the fact that $\tr _{\phi _{t}-\xi }(\phi )(t)=\xi $ 
as seen in \eqref{;twrel}, and by the definition of the function $\h _{\phi }$, 
the left-hand side of \eqref{;obs1} is equal to 
\begin{align*}
 \tr _{\xi -\h _{\phi }(\xi )}\bigl( 
 \tr _{\phi _{t}-\xi }(\phi )
 \bigr) ,
\end{align*}
which coincides with the right-hand side by \lref{;lttrans}\thetag{iv}. 
In view of the definition of $\h _{\phi }$, the latter observation~\eqref{;obs2} is 
verified in the same way, by noting that 
\begin{align*}
 \tr _{\tr _{z}(\phi )(t)-\xi }\bigl( \tr _{z}(\phi )\bigr) 
 &=\tr _{\phi _{t}-z-\xi }\bigl( \tr _{z}(\phi )\bigr) \\
 &=\tr _{\phi _{t}-\xi }(\phi ),
\end{align*}
which may also be seen as a consequence of \lref{;lttrans}\thetag{iii} and 
the fact that, as \eqref{;iipx} indicates, the dependence of $\tr _{\phi _{t}-\xi }(\phi )$ 
on $\phi $ is through $Z(\phi )$. Then, by the above two observations and 
\pref{;pdisint}, the left-hand side of \eqref{;qtmain} is written as 
\begin{align*}
 \ex \!\left[ 
 \int _{\R }\frac{d\xi }{2K_{0}\bigl( 
 1/Z_{t}(\phi )
 \bigr) }\,F\bigl( 
 \tr _{\phi _{t}-\h _{\phi }(\xi )}(\phi ),\tr _{\phi _{t}-\xi }(\phi )
 \bigr) \exp \left\{ 
 -\frac{\cosh \h _{\phi }(\xi )}{Z_{t}(\phi )}
 \right\} \left| 
 \h _{\phi }'(\xi )
 \right| \bigg| _{\phi =B}
 \right] ,
\end{align*}
in which, by changing the variables with $\eta =\h _{\phi }(\xi )$, 
the integral with respect to $\xi $ is equal to 
\begin{align*}
 \int _{\h _{\phi }(\R )}\frac{d\eta }{2K_{0}\bigl( 
 1/Z_{t}(\phi )
 \bigr) }\,F\!\left(  
 \tr _{\phi _{t}-\eta }(\phi ),\tr _{\phi _{t}-\h _{\phi }^{-1}(\eta )}(\phi )
 \right) \exp \left\{ 
 -\frac{\cosh \eta }{Z_{t}(\phi )}
 \right\} .
\end{align*}
Therefore, by using \pref{;pdisint} again, the above expectation coincides with 
the right-hand side of \eqref{;qtmain}.
\end{proof}

\begin{rem}\label{;rinvol1}
 \tref{;tmain} indicates that, for any $\phi \in C([0,t];\R )$ such that 
 $\phi _{t}\in \h _{\phi }(\R )$, setting $\psi \in C([0,t];\R )$ by 
 \begin{align*}
  \psi _{s}:=\tr _{\phi _{t}-\h _{\phi }^{-1}(\phi _{t})}(\phi )(s),
  \quad 0\le s\le t,
 \end{align*}
 we have 
 \begin{align}\label{;latter}
  \tr _{\psi _{t}-\h (\psi )}(\psi )=\phi ,
 \end{align}
 which is indeed the case as seen below. First note that 
 $\psi _{t}=\h _{\phi }^{-1}(\phi _{t})$ by \lref{;lttrans}\thetag{i}. 
 Moreover, by the definition of $\h _{\phi }$, 
 \begin{align*}
  \h (\psi )=\h _{\phi }\bigl( \h _{\phi }^{-1}(\phi _{t})\bigr) =\phi _{t}.
 \end{align*}
 Therefore we have 
 \begin{align*}
  \tr _{\psi _{t}-\h (\psi )}(\psi )
  =\tr _{\h _{\phi }^{-1}(\phi _{t})-\phi _{t}}\bigl( 
  \tr _{\phi _{t}-\h _{\phi }^{-1}(\phi _{t})}(\phi )
  \bigr) ,
 \end{align*}
 which is equal to $\phi $ by \lref{;lttrans}\thetag{iv}.
\end{rem}

\subsection{Proof of \cref{;cmain}}\label{;sspcmain}

We apply \tref{;tmain} to the function 
$\h (\phi )=h_{\La }(\phi _{t},\phi ),\,\phi \in C([0,t];\R )$, 
with $h_{\La }$ defined through \eqref{;dhal}. 
Notice that, for every $\phi \in C([0,t];\R )$, the 
associated function $\h _{\phi }:\R \to \R $ is given by 
\begin{align*}
 \h _{\phi }(\xi )=h_{\La }(\xi ,\phi ),\quad \xi \in \R .
\end{align*} 
Indeed, by the definition of $\h _{\phi }$ and \lref{;lttrans}\thetag{i}, 
\begin{align*}
 \h _{\phi }(\xi )&=h_{\La }\bigl( \tr _{\phi _{t}-\xi }(\phi )(t),\tr _{\phi _{t}-\xi }(\phi )\bigr) \\
 &=h_{\La }\bigl( \xi ,\tr _{\phi _{t}-\xi }(\phi )\bigr) ,
\end{align*}
of which the integrand in the defining relation~\eqref{;dhal} is the same as that of 
$h_{\La }(\xi ,\phi )$. Namely, it holds that, by properties~\thetag{i}, \thetag{iii} and \thetag{iv} 
of \lref{;lttrans}, 
\begin{align*}
 &\La \left( \tr _{\tr _{\phi _{t}-\xi }(\phi )(t)-x}\bigl( \tr _{\phi _{t}-\xi }(\phi )\bigr) \right) \exp \left\{ 
 -\frac{\cosh x}{Z_{t}\bigl( \tr _{\phi _{t}-\xi }(\phi )\bigr) }
 \right\} \\
 &=\La (\tr _{\phi _{t}-x}(\phi ))\exp \left\{ 
  -\frac{\cosh x}{Z_{t}(\phi )}
  \right\} 
\end{align*}
for any $x\in \R $. By the continuity and positivity of $\La $, together with 
the observation given just below the statement of \cref{;cmain}, it is clear that 
$\h _{\phi }$ fulfills the assumption of \tref{;tmain} and that $\h _{\phi }(\R )=\R $. 
Moreover, because relation~\eqref{;dhal} is rewritten as 
\begin{align*}
  \int _{-\infty }^{\xi }
  dx\,\La (\tr _{\phi _{t}-x}(\phi ))\exp \left\{ 
  -\frac{\cosh x}{Z_{t}(\phi )}
  \right\}  
  =\int _{h_{\La }(\xi ,\phi )}^{\infty }dx\,\La (\tr _{\phi _{t}-x}(\phi ))
  \exp \left\{ 
  -\frac{\cosh x}{Z_{t}(\phi )}
  \right\}  
\end{align*}
for any $\xi \in \R $, we have 
\begin{align}\label{;invhal}
 h_{\La }^{-1}(\xi ,\phi )=h_{\La }(\xi ,\phi ),\quad \xi \in \R ,
\end{align}
where $h_{\La }^{-1}$ denotes the inverse function of $h_{\La }$ 
in the first variable.

\begin{proof}[Proof of \cref{;cmain}]
Taking $\h (\phi )=h_{\La }(\phi _{t},\phi ),\,\phi \in C([0,t];\R )$, in \tref{;tmain}, 
we replace $F$ by a function of the form 
\begin{align*}
 F(\phi ^{1},\phi ^{2})\La (\phi ^{1}),\quad 
 (\phi ^{1},\phi ^{2})\in C([0,t];\R ^{2}),
\end{align*}
where $F$ is again a nonnegative measurable function on 
$C([0,t];\R ^{2})$. Then, with the notation $\ct _{\La }$ in 
\eqref{;dctal},  
the left-hand side of \eqref{;qtmain} turns into 
\begin{align*}
 \ex \!\left[ 
 F\bigl( \ct _{\La }(B),B\bigr) \La (\ct _{\La }(B))\exp \left\{ 
 -\frac{\cosh h_{\La }(B_{t},B)}{Z_{t}}+\frac{\cosh B_{t}}{Z_{t}}
 \right\} |h_{\La }'(B_{t},B)|
 \right] ,
\end{align*}
where the derivative of $h_{\La }$ is taken with respect to the 
first variable. The above expectation is equal to 
$
\ex \!\left[ 
F\bigl( \ct _{\La }(B),B\bigr) \La (B)
\right] 
$ since relation~\eqref{;dhal} entails that 
\begin{align*}
 &\La \bigl( 
 \tr _{\phi _{t}-h_{\La }(\xi ,\phi )}(\phi )
 \bigr) 
 \exp \left\{ 
 -\frac{\cosh h_{\La }(\xi ,\phi )}{Z_{t}(\phi )}
 \right\} h_{\La }'(\xi ,\phi )\\
 &=-\La \bigl( 
 \tr _{\phi _{t}-\xi }(\phi )
 \bigr) \exp \left\{ 
 -\frac{\cosh \xi }{Z_{t}(\phi )}
 \right\} 
\end{align*}
for every $\xi \in \R $ and $\phi \in C([0,t];\R )$, and hence, 
by inserting $\xi =B_{t}$ and $\phi =B$, 
\begin{align*}
 \La (\ct _{\La }(B))\exp \left\{ 
 -\frac{\cosh h_{\La }(B_{t},B)}{Z_{t}}
 \right\} h_{\La }'(B_{t},B)
 =-\La (B)\exp \left( 
 -\frac{\cosh B_{t}}{Z_{t}}
 \right) .
\end{align*}
Here we used the fact that $\tr _{0}=\id $ in the right-hand side of 
the last equality. On the other hand, with the above replacement of $F$ and 
in view of \eqref{;invhal}, the right-hand side of \eqref{;qtmain} becomes 
$
\ex \!\left[ 
F\bigl( B,\ct _{\La }(B)\bigr) \La (B)
\right] 
$, 
verifying the claim.
\end{proof}

\begin{rem}\label{;rinvol2}
By virtue of \eqref{;invhal}, relation~\eqref{;latter} 
reveals that $\ct _{\La }$ is an involution.
\end{rem}

%%%%%%%%% New Section %%%%%%%%% 
\section{Examples}\label{;se}
In this section, we apply \tref{;tmain} and \cref{;cmain} to 
provide some examples. In what follows, $t>0$ is fixed 
as above and, as in the proof of 
\cref{;cmain}, the symbol $F$ refers to a generic 
nonnegative measurable function on $C([0,t];\R ^{2})$ which 
may differ in different contexts.

\subsection{Examples of \tref{;tmain}}\label{;sset}
In all of the examples below, we consider a specific case that 
$\h :C([0,t];\R )\to \R $ is of the form 
\begin{align*}
 \h (\phi )=k\bigl( \phi _{t},Z_{t}(\phi )\bigr) ,\quad \phi \in C([0,t];\R ),
\end{align*}
where $k\equiv k(t,\,\cdot\,,\cdot \,)$ is a measurable function on $\R \times (0,\infty )$ 
such that, for every $\zeta >0$, the function 
\begin{align*}
 \R \ni \xi \mapsto k(\xi ,\zeta )
\end{align*}
is of class $C^{1}$ and strictly monotone. Note that, in view of 
\eqref{;twrel}, we have $\h _{\phi }(\xi )=k\bigl( \xi ,Z_{t}(\phi )\bigr) $ 
for every $\phi \in C([0,t];\R )$ and $\xi \in \R $.
We start with restating \eqref{;qtmain} under the above setting 
assuming that 
\begin{align*}
 |k'(\xi ,\zeta )|>0 \quad \text{for all }
 (\xi ,\zeta )\in \R \times (0,\infty ).
\end{align*}
Here and in what follows, the derivative, as well as the inverse, is taken with 
respect to the first variable. In \eqref{;qtmain}, we replace $F$ by a function of the form 
\begin{align*}
 F(\phi ^{1},\phi ^{2})
 \exp \left\{ 
 \frac{\cosh k\bigl( \phi ^{2}_{t},Z_{t}(\phi ^{2})\bigr) }{Z_{t}(\phi ^{2})}
 -\frac{\cosh \phi ^{2}_{t}}{Z_{t}(\phi ^{2})}
 \right\} \frac{1}{
 \bigl| k'\bigl( 
 \phi ^{2}_{t},Z_{t}(\phi ^{2})\bigr) \bigr| 
 }, && (\phi ^{1},\phi ^{2})\in C([0,t];\R ^{2}).
\end{align*}
Then, since, denoting $\phi ^{2}=\tr _{B_{t}-k^{-1}(B_{t},Z_{t})}(B)$, 
we have 
\begin{align}\label{;ex41q1}
 \phi ^{2}_{t}=k^{-1}(B_{t},Z_{t}) && 
 \text{and} && 
 Z_{t}(\phi ^{2})=Z_{t}
\end{align}
by \thetag{i} and \thetag{iii} of \lref{;lttrans}, relation~\eqref{;qtmain} 
turns into 
\begin{equation}\label{;qtmaind}
  \begin{split}
  &\ex \!\left[ 
  F\bigl( 
  \tr _{B_{t}-k(B_{t},Z_{t})}(B),B
  \bigr) 
  \right] \\
  &=\ex \biggl[ 
  F\bigl( 
  B,\tr _{B_{t}-k^{-1}(B_{t},Z_{t})}(B)
  \bigr) \exp \left\{ 
  \frac{\cosh B_{t}}{Z_{t}}-\frac{\cosh k^{-1}(B_{t},Z_{t})}{Z_{t}}
  \right\} \\
  &\hspace{55mm}\times \bigl| (k^{-1})'(B_{t},Z_{t})\bigr| ;\,
  (B_{t},Z_{t})\in D_{k}
  \biggr] ,
 \end{split}
\end{equation}
where a measurable set $D_{k}\subset \R \times (0,\infty )$ is defined by 
\begin{align*}
 D_{k}:=\left\{ 
 (\xi ,\zeta )\in \R \times (0,\infty );\,\xi \in k(\R ,\zeta )
 \right\} ,
\end{align*}
with $k(\R ,\zeta )$ the image of $\R $ under $k(\,\cdot \,,\zeta )$ 
for every $\zeta >0$.

\begin{exm}\label{;ex411}
Given $\al \ge 0$, let $k(\xi ,\zeta )=\xi -\log (1+\al \zeta )$. Then 
$D_{k}=\R \times (0,\infty )$ and 
$k^{-1}(\xi ,\zeta )=\xi +\log (1+\al \zeta )$. Noting that 
\begin{align*}
 \frac{\cosh \xi }{\zeta }-\frac{\cosh k^{-1}(\xi ,\zeta )}{\zeta }
 =\frac{\al }{2}\left( \frac{e^{-\xi }}{1+\al \zeta }-e^{\xi }\right) 
\end{align*}
for every $\xi \in \R $ and $\zeta >0$,  
we have, from \eqref{;qtmaind}, 
\begin{align*}
 &\ex \!\left[ 
 F\bigl( 
 \tr _{\log (1+\al Z_{t})}(B),B
 \bigr) 
 \right] \\
 &=\ex \!\left[ F\bigl( 
 B,\tr _{-\log (1+\al Z_{t})}(B)
 \bigr) \exp \left\{ 
 \frac{\al }{2}\left( \frac{e^{-B_{t}}}{1+\al Z_{t}}-e^{B_{t}}\right) 
 \right\} 
 \right] .
\end{align*}
For every fixed $\mu \in \R $, we further replace $F$ by 
\begin{align}\label{;fcm}
 F(\phi ^{1},\phi ^{2})e^{\mu \phi ^{2}_{t}-\mu ^{2}t/2},\quad 
 (\phi ^{1},\phi ^{2})\in C([0,t];\R ^{2}).
\end{align}
Then, noting that, as to the right-hand side, 
$
\exp \bigl\{ \tr _{-\log (1+\al Z_{t})}(B)(t)\bigr\}
=e^{B_{t}}(1+\al Z_{t})  
$ in view of the former relation in \eqref{;ex41q1}, 
we obtain, by the Cameron--Martin formula,
\begin{align*}
 &\ex \!\left[ 
 F\bigl( 
 \tr _{\log \{ 1+\al \dz{\mu }_{t}\} }(\db{\mu }),\db{\mu }
 \bigr) 
 \right] \\
 &=\ex \!\left[ F\bigl( 
 \db{\mu },\tr _{-\log \{ 1+\al \dz{\mu }_{t}\} }(\db{\mu })
 \bigr) \exp \left\{ 
 \frac{\al }{2}\left( \frac{e^{-\db{\mu }_{t}}}{1+\al \dz{\mu }_{t}}
 -e^{\db{\mu }_{t}}
 \right) 
 \right\} \bigl\{ 1+\al \dz{\mu }_{t}\bigr\} ^{\mu }
 \right] .
\end{align*}
When $F$ is independent of the second variable $\phi ^{2}$, the above relation is 
Theorem~1.1 in \cite{dmy} since, by the definition \eqref{;ttrans} of $\{ \tr _{z}\} _{z\in \R }$, 
\begin{align*}
 \tr _{\log \{ 1+\al \dz{\mu }_{t}\} }(\db{\mu })(s)
 =\db{\mu }_{s}-\log \left\{ 
 1+\al e^{-\db{\mu }_{t}}\!\da{\mu }_{s}
 \right\} ,\quad 0\le s\le t,
\end{align*}
where we have used the relation 
$\dz{\mu }_{t}=e^{-\db{\mu }_{t}}\!\da{\mu }_{t}$ 
by the definition~\eqref{;defz} of the transformation $Z$.
\end{exm}

\begin{exm}\label{;ex4112}
Given $\al \ge 0$, let $k(\xi ,\zeta )=-\log (e^{-\xi }+\al \zeta )$. 
Then  
\begin{align*}
 D_{k}=\left\{ 
 (\xi ,\zeta )\in \R \times (0,\infty );\,1/(e^{\xi }\zeta )>\al 
 \right\} ,
\end{align*}
on which we have $k^{-1}(\xi ,\zeta )=-\log (e^{-\xi }-\al \zeta )$ and 
\begin{align*}
 \frac{\cosh \xi }{\zeta }-\frac{\cosh k^{-1}(\xi ,\zeta )}{\zeta }
 =\frac{\al }{2}\left( 
 1-\frac{e^{2\xi }}{1-\al e^{\xi }\zeta }
 \right) ,
\end{align*}
as well as $(k^{-1})'(\xi ,\zeta )=1/(1-\al e^{\xi }\zeta )$.
Therefore, by \eqref{;qtmaind}, 
\begin{equation}\label{;ex412q1}
\begin{split}
 &\ex \!\left[ 
 F\bigl( 
 \tr _{\log (1+\al A_{t})}(B),B
 \bigr) \right] \\
 &=\ex \!\left[ 
 F\bigl( 
 B,\tr _{\log (1-\al A_{t})}(B)
 \bigr) \exp \left\{ \frac{\al }{2}\left( 
 1-\frac{e^{2B_{t}}}{1-\al A_{t}}
 \right) \right\} \frac{1}{1-\al A_{t}}
 ;\, \frac{1}{A_{t}}>\al 
 \right] ,
\end{split}
\end{equation}
where we have used the relation $A_{t}=e^{B_{t}}Z_{t}$. 
For every fixed $\mu \in \R $, we replace $F$ by a 
function of the form~\eqref{;fcm} to deduce further that, 
by the Cameron--Martin formula, 
\begin{align*}
 &\ex \!\left[ 
 F\bigl( 
 \tr _{\log \{ 1+\al \da{\mu }_{t}\} }(\db{\mu }),\db{\mu }
 \bigr) \right] \\
 &=\ex \Biggl[ 
 F\bigl( 
 \db{\mu },\tr _{\log \{ 1-\al \da{\mu }_{t}\} }(\db{\mu })
 \bigr) \exp \left\{ \frac{\al }{2}\left( 
 1-\frac{e^{2\db{\mu }_{t}}}{1-\al \da{\mu }_{t}}
 \right) \right\} \\
 &\hspace{56mm}\times \frac{1}{\bigl\{ 1-\al \da{\mu }_{t}\bigr\} ^{1+\mu }}
 ;\, \frac{1}{\da{\mu }_{t}}>\al 
 \Biggr] ,
\end{align*}
noting that 
$
\exp \bigl\{ \tr _{\log (1-\al A_{t})}(B)(t)\bigr\} 
=e^{B_{t}}/(1-\al A_{t})
$ 
by \lref{;lttrans}\thetag{i} as to the right-hand side. 
Since, by the definition \eqref{;ttrans} of $\{ \tr _{z}\} _{z\in \R }$, 
the transformation of the form 
\begin{align}\label{;nonantic}
 \tr _{\log \{ 1+\al A_{t}(\phi )\} }(\phi ),\quad 
 \phi \in C([0,t];\R ),
\end{align}
is expressed as 
\begin{align*}
 \phi _{s}-\log \left\{ 
 1+\al A_{s}(\phi )
 \right\} ,\quad 0\le s\le t,
\end{align*}
the last displayed relation extends 
\cite[Theorem~1.5]{dmy} particularly to the case that 
$\mu $ is allowed to take negative values.
\end{exm}

\begin{rem}\label{;rnonantic}
Although the transformation~\eqref{;nonantic} is non-anticipative, 
it is clear that relation~\eqref{;ex412q1} is not the one that follows from 
Girsanov's formula, for which we also refer to \cite[Remark~1.1]{dmy}.
\end{rem}

\begin{exm}\label{;ex413}
Given $z\in \R $, let $k(\xi ,\zeta )=\xi -z$. Then  
$D_{k}=\R \times (0,\infty )$ and  
$k^{-1}(\xi ,\zeta )=\xi +z$, whence,   
by \eqref{;qtmaind}, 
\begin{align*}
 \ex \!\left[ 
 F\bigl( \tr _{z}(B),B\bigr) 
 \right] 
 =\ex \!\left[ 
 F\bigl( B,\tr _{-z}(B)\bigr) 
 \exp \left\{ 
 \frac{\cosh B_{t}}{Z_{t}}-\frac{\cosh (B_{t}+z)}{Z_{t}}
 \right\} 
 \right] ,
\end{align*}
which recovers \cite[Theorem~1.2]{har22}. The above 
relation is consistent with the property 
$\tr _{z}\circ \tr _{-z}=\id $ in \lref{;lttrans}\thetag{iv}.
\end{exm}

We use the notation in \cite{har22a} to denote 
\begin{align}\label{;dct}
 \ct (\phi )(s):=\tr _{2\phi _{t}}(\phi )(s),\quad 
 0\le s\le t,
\end{align}
for $\phi \in C([0,t];\R )$.

\begin{exm}\label{;ex414}
In this example, we let $k(\xi ,\zeta )=-\xi $ in \eqref{;qtmaind} to see that 
\begin{align}\label{;invbex}
 \ex \!\left[ 
 F\bigl( 
 \tr _{2B_{t}}(B),B
 \bigr) 
 \right] 
 =\ex \!\left[ 
 F\bigl( 
 B,\tr _{2B_{t}}(B)
 \bigr) 
 \right] ,
\end{align}
that is, with the notation recalled above, we have 
\begin{align}\label{;invb}
 \left\{ 
 \bigl( 
 \ct (B)(s),\,B_{s}
 \bigr) 
 \right\} _{0\le s\le t}
 \eqd 
 \left\{ 
 \bigl( 
 B_{s},\,\ct (B)(s)
 \bigr) 
 \right\} _{0\le s\le t}, 
\end{align}
which is \cite[Theorem~1.1]{har22a} or, more precisely, 
\cite[Corollary~1.1]{har22a} with $\mu =0$ therein. 
In particular, the Wiener measure on $C([0,t];\R )$ is 
invariant under $\ct $. A generalization of \eqref{;invb} 
to the case of Brownian motion with drift or other diffusion 
processes is given in \ssref{;ssec} as an application of 
\cref{;cmain}. For properties of $\ct $ such as 
$\ct \circ \ct =\id $ and the compatibility with the 
time-reversal operator $R$ defined in \eqref{;trev} 
that follows from \eqref{;comptz}, we refer to 
\cite[Proposition~2.1]{har22a}.
\end{exm}

\begin{exm}\label{;ex415}
Given $x\ge 0$, we consider the following two cases: 
\begin{align*}
 \thetag{i}~k(\xi ,\zeta )=\log (e^{-\xi }+2x\zeta ); 
 &&   
 \thetag{ii}~k(\xi ,\zeta )=-\log (e^{\xi }+2x\zeta ).
\end{align*}

\thetag{i} In this case, 
\begin{align*}
 D_{k}=\left\{ 
 (\xi ,\zeta )\in \R \times (0,\infty );\,e^{\xi }/(2\zeta )>x
 \right\} ,
\end{align*}
on which we have $k^{-1}(\xi ,\zeta )=-\log (e^{\xi }-2x\zeta )$, 
\begin{align*}
 \frac{\cosh \xi }{\zeta }-\frac{\cosh k^{-1}(\xi ,\zeta )}{\zeta }
 =x-\frac{x}{e^{2\xi }-2xe^{\xi }\zeta },
\end{align*}
and $(k^{-1})'(\xi ,\zeta )=-e^{\xi }/(e^{\xi }-2x\zeta )$.
Therefore, by \eqref{;qtmaind} and by recalling the relation 
$A_{t}=e^{B_{t}}Z_{t}$,
\begin{equation}\label{;ex415q1}
\begin{split}
 &\ex \!\left[ 
 F\bigl( 
 \tr _{\log \{ e^{2B_{t}}/(1+2xA_{t})\} }(B),B \bigr) 
 \right] \\
 &=\ex \!\left[ 
 F\bigl( 
 B,\tr _{\log (e^{2B_{t}}-2xA_{t})}(B)\bigr) 
 \exp \left( 
 x-\frac{x}{e^{2B_{t}}-2xA_{t}}
 \right) \frac{e^{2B_{t}}}{e^{2B_{t}}-2xA_{t}};\,
 \frac{e^{2B_{t}}}{2A_{t}}>x
 \right] ,
\end{split}
\end{equation}
which is the former relation in \cite[Proposition~5.3]{har22a}.  

\thetag{ii} In the case $k(\xi ,\zeta )=-\log (e^{\xi }+2x\zeta )$, 
\begin{align*}
 D_{k}=\left\{ 
 (\xi ,\zeta )\in \R \times (0,\infty );\,1/(2e^{\xi }\zeta )>x
 \right\} ,
\end{align*}
on which we have $k^{-1}(\xi ,\zeta )=\log (e^{-\xi }-2x\zeta )$ and 
\begin{align*}
 \frac{\cosh \xi }{\zeta }-\frac{\cosh k(\xi ,\zeta )}{\zeta }
 =x-\frac{xe^{2\xi }}{1-2xe^{\xi }\zeta },
\end{align*}
as well as $(k^{-1})'(\xi ,\zeta )=-1/(1-2xe^{\xi }\zeta )$.
Therefore, by \eqref{;qtmaind},
\begin{equation}\label{;ex415q2}
\begin{split}
 &\ex \!\left[ 
 F\bigl( 
 \tr _{\log (e^{2B_{t}}+2xA_{t})}(B),B\bigr) 
 \right] \\
 &=\ex \!\left[ 
 F\bigl( 
 B,\tr _{\log \{ e^{2B_{t}}/(1-2xA_{t})\} }(B)
 \bigr) \exp \left( 
 x-\frac{xe^{2B_{t}}}{1-2xA_{t}}
 \right) \frac{1}{1-2xA_{t}};\,
 \frac{1}{2A_{t}}>x
 \right] ,
\end{split}
\end{equation}
which is the latter relation in \cite[Proposition~5.3]{har22a}.
\end{exm}

\begin{rem}\label{;requiv}
With $\al =2x$ in \eqref{;ex412q1}, the above three 
relations~\eqref{;ex412q1}, \eqref{;ex415q1} and \eqref{;ex415q2} are 
equivalent and related via the identity~\eqref{;invb} in law. For instance, 
if, in \eqref{;ex412q1}, we replace $F$ by a function of the form 
\begin{align*}
 F\bigl( 
 \phi ^{1},\ct (\phi ^{2})
 \bigr) ,\quad (\phi ^{1},\phi ^{2})\in C([0,t];\R ^{2}),
\end{align*}
then the left-hand side turns into 
\begin{align*}
 \ex \!\left[ 
 F\bigl( 
 \tr _{\log (1+\al A_{t})}(B),\ct (B)
 \bigr) 
 \right] =\ex \!\left[ 
 F\bigl( 
 \tr _{\log \{ 1+\al A_{t}(\ct (B))\} }(\ct (B)),B
 \bigr) 
 \right] 
\end{align*}
owing to \eqref{;invb}, which agrees with the left-hand side of 
\eqref{;ex415q2} because, by the 
definition~\eqref{;dct} of $\ct $, 
\begin{align*}
 \tr _{\log \{ 1+\al A_{t}(\ct (B))\} }(\ct (B))
 &=\tr _{\log (1+\al e^{-2B_{t}}A_{t})}\bigl( \tr _{2B_{t}}(B)\bigr) \\
 &=\tr _{\log (1+\al e^{-2B_{t}}A_{t})+2B_{t}}(B),
\end{align*}
thanks to \thetag{ii} and \thetag{iv} of \lref{;lttrans}. 
On the other hand, as for the right-hand side, note that, 
by the definition of $\ct $ and properties~\thetag{i} and 
\thetag{iv} of \lref{;lttrans}, 
\begin{align*}
 \ct \bigl( 
 \tr _{\log (1-\al A_{t})}(B)
 \bigr) &=\tr _{2B_{t}-2\log (1-\al A_{t})}
 \bigl( 
 \tr _{\log (1-\al A_{t})}(B)
 \bigr) \\
 &=\tr _{2B_{t}-\log (1-\al A_{t})}(B)
\end{align*}
on the event that $A_{t}<1/\al $, and hence, 
with the above replacement of $F$ and $\al =2x$, 
the right-hand side of \eqref{;ex412q1} agrees with 
that of \eqref{;ex415q2}. Other implications between 
these three relations may be verified in a similar manner 
(cf.\ \cite[Remarks~5.3 and 5.4]{har22a}).
\end{rem}

\subsection{Examples of \cref{;cmain}}\label{;ssec}

In this subsection, we explore several examples of \cref{;cmain}. 
We mainly focus on a specific case that 
$\La :C([0,t];\R )\to (0,\infty )$ is of the form 
\begin{align*}
 \La (\phi )=\la \bigl( \phi _{t},Z_{t}(\phi )\bigr) ,\quad \phi \in C([0,t];\R ),
\end{align*}
with $\la \equiv \la (t,\,\cdot\,,\cdot \,):\R \times (0,\infty )\to (0,\infty )$ 
a continuous function satisfying 
\begin{align*}
 \int _{\R }d\xi \,\la (\xi ,\zeta )
 \exp \left( 
 -\frac{\cosh \xi }{\zeta }
 \right) <\infty \quad \text{for all $\zeta >0$}.
\end{align*}
Then, because of the fact that, for every $\xi \in \R $ and 
$\phi \in C([0,t];\R )$, 
\begin{align*}
 \La \bigl( \tr _{\phi _{t}-\xi }(\phi )\bigr) 
 =\la \bigl( \xi ,Z_{t}(\phi )\bigr) 
\end{align*}
in view of \eqref{;twrel}, \cref{;cmain} is restated in such a way that, for every nonnegative measurable 
function $F$ on $C([0,t];\R ^{2})$,  
\begin{align}\label{;qcmaind}
 \ex \!\left[ 
 F\bigl( 
 \tr _{B_{t}-h_{\la }(B_{t},Z_{t})}(B),B
 \bigr) \la (B_{t},Z_{t})
 \right] 
 =\ex \!\left[ 
 F\bigl( B,\tr _{B_{t}-h_{\la }(B_{t},Z_{t})}(B)\bigr) \la (B_{t},Z_{t})
 \right] ,
\end{align}
where, with slight abuse of notation, $h_{\la }$ is defined through
\begin{equation}\label{;dhald}
\begin{split}
 \int _{-\infty }^{h_{\la }(\xi ,\zeta )}
 dx\,\la (x,\zeta )\exp \left( 
 -\frac{\cosh x}{\zeta }
 \right) =\int _{\xi }^{\infty }dx\,\la (x,\zeta )
 \exp \left( 
 -\frac{\cosh x}{\zeta }
 \right) 
\end{split} 
\end{equation}
for $\xi \in \R $ and $\zeta >0$. We see from \eqref{;invhal} that  
\begin{align}\label{;invhald}
 h_{\la }^{-1}(\xi ,\zeta )=h_{\la }(\xi ,\zeta )
\end{align}
for all $\xi \in \R $ and $\zeta >0$.

\begin{exm}\label{;ex421}
We consider the case where $\la (\,\cdot \,,\zeta )$ is an even 
function for every $\zeta >0$, in which case we have 
\begin{align*}
 h_{\la }(\xi ,\zeta )=-\xi ,\quad \xi \in \R ,
\end{align*}
since relation~\eqref{;dhald} is rewritten as 
\begin{align*}
 \int _{-\infty }^{h_{\la }(\xi ,\zeta )}
 dx\,\la (x,\zeta )\exp \left( 
 -\frac{\cosh x}{\zeta }
 \right) 
 &=\int _{-\infty }^{-\xi }dx\,\la (-x,\zeta )
 \exp \left( 
 -\frac{\cosh x}{\zeta }
 \right) \\
 &=\int _{-\infty }^{-\xi }
 dx\,\la (x,\zeta )\exp \left( 
 -\frac{\cosh x}{\zeta }
 \right) .
\end{align*}
Therefore, with the notation in \eqref{;dct}, relation~\eqref{;qcmaind} 
becomes 
\begin{align}\label{;sym}
 \ex \!\left[ 
 F\bigl( 
 \ct (B),B
 \bigr) \la (B_{t},Z_{t})
 \right] 
 = \ex \!\left[ 
 F\bigl( 
 B,\ct (B)
 \bigr) \la (B_{t},Z_{t})
 \right] .
\end{align}

\thetag{1} On one hand, the last relation indicates the identity~\eqref{;invb} in law, 
which is seen not only by taking $\la \equiv 1$ but also by replacing $F$ by 
a function of the form 
\begin{align*}
 F(\phi ^{1},\phi ^{2})/\la \bigl( \phi ^{2}_{t},Z_{t}(\phi ^{2})\bigr) ,
 \quad (\phi ^{1},\phi ^{2})\in C([0,t];\R );
\end{align*}
indeed, with the above replacement, relation~\eqref{;sym} 
turns into 
\begin{align*}
 \ex \!\left[ 
 F\bigl( 
 \ct (B),B
 \bigr) 
 \right] 
 &= \ex \!\left[ 
 F\bigl( 
 B,\ct (B)
 \bigr) \frac{\la (B_{t},Z_{t})}{
 \la (-B_{t},Z_{t})
 }
 \right] \\
 &=\ex \!\left[ 
 F\bigl( 
 B,\ct (B)
 \bigr) 
 \right] ,
\end{align*}
where, for the first line, we have used the fact that, 
for every $\phi \in C([0,t];\R )$,
\begin{align*}
 \ct (\phi )(t)=-\phi _{t} && \text{and} && 
 Z_{t}\bigl( \ct (\phi )\bigr) =Z_{t}(\phi )
\end{align*}
by the definition~\eqref{;dct} of $\ct $ and properties~\thetag{i} and 
\thetag{iii} of \lref{;lttrans}.

\thetag{2} On the other hand, one may also deduce from \eqref{;sym} 
the distributional invariance under the transformation $\ct $ of processes 
that differ from Brownian motion. As an illustration, we consider the 
following example: given $\mu \in \R $, let 
\begin{align*}
 \la (\xi ,\zeta )=\cosh (\mu \xi )e^{-\mu ^{2}t/2},\quad \xi \in \R ,
\end{align*}
for every $\zeta >0$. Then, by noting that the process 
\begin{align*}
 \cosh (\mu B_{s})e^{-\mu ^{2}s/2},\quad s\ge 0,
\end{align*}
is a martingale with initial value $1$, Girsanov's formula 
entails that the law of the solution 
$X^{(\mu )}=\bigl\{ X^{(\mu )}_{s}\bigr\} _{0\le s\le t}$ to 
the stochastic differential equation (SDE)
\begin{align*}
 dX_{s}=dB_{s}+\mu \tanh (\mu X_{s})\,ds,\quad X_{0}=0,
\end{align*}
is also invariant under $\ct $; more precisely, 
\begin{align*}
 \left( 
 \ct \bigl( X^{(\mu )}\bigr) ,\,X^{(\mu )}
 \right) 
 \eqd 
 \left( 
 X^{(\mu )},\,\ct \bigl( X^{(\mu )}\bigr) 
 \right) .
\end{align*}
The case $\mu =0$ agrees with \eqref{;invb}.
\end{exm}

\begin{exm}\label{;ex422}
Given $\mu \in \R $, let 
\begin{align}\label{;wcm}
 \la (\xi ,\zeta )=\exp \left( 
 \mu \xi -\frac{\mu ^{2}}{2}t
 \right) , \quad \xi \in \R ,
\end{align}
for every $\zeta >0$. Then, with the notation in \eqref{;dctal}, 
\cref{;cmain} entails that, by the Cameron--Martin formula, 
\begin{align}\label{;invbd}
 \left\{ 
 \bigl( 
 \ct _{\La }(\db{\mu })(s),\,\db{\mu }_{s}
 \bigr) 
 \right\} _{0\le s\le t}
 \eqd 
 \left\{ 
 \bigl( 
 \db{\mu }_{s},\,\ct _{\La }(\db{\mu })(s)
 \bigr) 
 \right\} _{0\le s\le t}, 
\end{align}
which extends \eqref{;invb} to the case of 
Brownian motion with drift, for the case $\mu =0$ 
corresponds to the case $\la \equiv 1$ in \eref{;ex421}.
\end{exm}

Recall from \cite[Corollary~1.1]{har22a} that the laws of 
Brownian motions with opposite drifts are related via 
\begin{align}\label{;opp}
 \left\{ 
 \bigl( 
 \ct (\db{-\mu })(s),\,\db{-\mu }_{s}
 \bigr) 
 \right\} _{0\le s\le t}
 \eqd 
 \left\{ 
 \bigl( 
 \db{\mu }_{s},\,\ct (\db{\mu })(s)
 \bigr) 
 \right\} _{0\le s\le t}
\end{align}
for every $\mu \in \R $, which is seen 
by replacing $F$ in \eqref{;invbex} by a function of the form 
\begin{align*}
 F(\phi ^{1},\phi ^{2})e^{\mu \phi ^{1}_{t}},\quad 
 (\phi ^{1},\phi ^{2})\in C([0,t];\R ^{2}).
\end{align*}
The above example enables us to obtain another distributional 
relationship between $\db{\mu }$ and $\db{-\mu}$ as in the 
proposition below. We denote 
by $k_{\mu }$ the function $h_{\la }$ corresponding to \eqref{;wcm}, 
namely $k_{\mu }$ is defined through 
\begin{align*}
 \int _{-\infty }^{k_{\mu }(\xi ,\zeta )}dx\,e^{\mu x}
 \exp \left( 
 -\frac{\cosh x}{\zeta }
 \right) 
 =\int _{\xi }^{\infty }dx\,e^{\mu x}
 \exp \left( 
 -\frac{\cosh x}{\zeta }
 \right) 
\end{align*}
for $\xi \in \R $ and $\zeta >0$. It is readily seen that 
\begin{align}\label{;pm}
 -k_{-\mu }(\xi ,\zeta )=k_{\mu }(-\xi ,\zeta )
\end{align}
for all $\xi \in \R $ and $\zeta >0$. We denote by 
$\cS _{\mu }$ the path transformation defined by 
\begin{align*}
 \cS _{\mu }(\phi )(s)
 :=\tr _{\phi _{t}+k_{\mu }(\phi _{t},Z_{t}(\phi ))}(\phi )(s),
 \quad 0\le s\le t,
\end{align*}
for $\phi \in C([0,t];\R )$. It then holds that 
\begin{align}\label{;comps}
 \cS _{\mu }\circ \cS _{-\mu }=\id .
\end{align}
Indeed, pick $\psi \in C([0,t];\R )$ arbitrarily and 
set $\phi =\cS _{-\mu }(\psi )$. Then we have, 
by \thetag{i} and \thetag{iii} of \lref{;lttrans}, 
\begin{align*}
 \phi _{t}+k_{\mu }(\phi _{t},Z_{t}(\phi ))
 &=-k_{-\mu }(\psi _{t},Z_{t}(\psi ))
 +k_{\mu }\bigl( 
 -k_{-\mu }(\psi _{t},Z_{t}(\psi )),Z_{t}(\psi )
 \bigr) \\
 &=-k_{-\mu }(\psi _{t},Z_{t}(\psi ))
 +k_{\mu }\bigl( 
 k_{\mu }(-\psi _{t},Z_{t}(\psi )),Z_{t}(\psi )
 \bigr) \\
 &=-k_{-\mu }(\psi _{t},Z_{t}(\psi ))-\psi _{t}, 
\end{align*}
where we have used \eqref{;pm} for the second line and 
applied \eqref{;invhald} to $k_{\mu }$ for the third. Therefore 
\begin{align*}
 \cS _{\mu }(\phi )
 &=\tr _{-k_{-\mu }(\psi _{t},Z_{t}(\psi ))-\psi _{t}}
 \bigl( 
 \tr _{\psi _{t}+k_{-\mu }(\psi _{t},Z_{t}(\psi ))}(\psi )
 \bigr) , 
\end{align*}
which, by \lref{;lttrans}\thetag{iv}, is equal to 
$\psi $ as claimed. 

\begin{prop}\label{;relopp}
For every $\mu \in \R $, we have  
\begin{align*}
 \left\{ 
 \bigl( 
 \cS _{-\mu }(\db{-\mu })(s),\,\db{-\mu }_{s}
 \bigr) 
 \right\} _{0\le s\le t}
 \eqd 
 \left\{ 
 \bigl( 
 \db{\mu }_{s},\,\cS _{\mu }(\db{\mu })(s)
 \bigr) 
 \right\} _{0\le s\le t}.
\end{align*}
\end{prop}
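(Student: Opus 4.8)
The plan is to splice together the two distributional identities already at hand --- \eqref{;invbd}, the invariance of $\db{\mu}$ under the transformation $\ct_{\La}$ occurring there (with $\La$ as in \eqref{;wcm}; write $\ct_{\mu}$ for it, and $\ct_{-\mu}$ for the analogous one attached to drift $-\mu$), and \eqref{;opp}, which ties $\db{\mu}$ to $\db{-\mu}$ through $\ct$ --- by means of two purely algebraic composition identities. The identities I would isolate first are, for every $\mu\in\R$,
\begin{align*}
 \ct\circ\ct_{\mu}=\cS_{\mu} && \text{and} && \ct_{\mu}\circ\ct=\cS_{-\mu}.
\end{align*}
Both follow from \lref{;lttrans}: by \thetag{i} and \thetag{iii} the path $\ct_{\mu}(\phi)$ has terminal value $k_{\mu}(\phi_{t},Z_{t}(\phi))$ and the same $Z_{t}$ as $\phi$, so composing with $\ct$ and invoking the semigroup property \thetag{iv} gives $\ct(\ct_{\mu}(\phi))=\tr_{\phi_{t}+k_{\mu}(\phi_{t},Z_{t}(\phi))}(\phi)=\cS_{\mu}(\phi)$; and since $\ct(\phi)$ has terminal value $-\phi_{t}$, the same computation gives $\ct_{\mu}(\ct(\phi))=\tr_{\phi_{t}-k_{\mu}(-\phi_{t},Z_{t}(\phi))}(\phi)$, which is $\cS_{-\mu}(\phi)$ once $k_{\mu}(-\phi_{t},Z_{t}(\phi))=-k_{-\mu}(\phi_{t},Z_{t}(\phi))$ is substituted via \eqref{;pm}.

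With these in hand, fix a nonnegative measurable $F$ on $C([0,t];\R^{2})$. Applying \eqref{;invbd} to the functional $(\phi^{1},\phi^{2})\mapsto F(\phi^{1},\ct(\phi^{2}))$ and using $\ct\circ\ct_{\mu}=\cS_{\mu}$ on the right-hand side gives
\begin{align*}
 \ex\!\left[F\bigl(\ct_{\mu}(\db{\mu}),\ct(\db{\mu})\bigr)\right]
 &=\ex\!\left[F\bigl(\db{\mu},\cS_{\mu}(\db{\mu})\bigr)\right],
\end{align*}
while applying \eqref{;opp} to $(\psi^{1},\psi^{2})\mapsto F(\ct_{\mu}(\psi^{1}),\psi^{2})$ and using $\ct_{\mu}\circ\ct=\cS_{-\mu}$ on the left-hand side gives
\begin{align*}
 \ex\!\left[F\bigl(\cS_{-\mu}(\db{-\mu}),\db{-\mu}\bigr)\right]
 &=\ex\!\left[F\bigl(\ct_{\mu}(\db{\mu}),\ct(\db{\mu})\bigr)\right].
\end{align*}
Chaining the two displays is the proposition.

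The one step that needs care is the pair of composition identities, where the only real hazard is a sign error in tracking whether $k_{\mu}$ or $k_{-\mu}$ remains after each application of $\tr_{z}$ and in the correct use of \eqref{;pm}; once these are secured, the two functional substitutions are mechanical. As an alternative, one may reduce first to the marginal statement $\cS_{-\mu}(\db{-\mu})\eqd\db{\mu}$ --- which is equivalent to the proposition since $\cS_{\mu}\circ\cS_{-\mu}=\id$ by \eqref{;comps} --- and obtain it by combining $\ct_{-\mu}(\db{-\mu})\eqd\db{-\mu}$ (from \eqref{;invbd} with drift $-\mu$), $\ct(\db{-\mu})\eqd\db{\mu}$ (the marginal of \eqref{;opp}), and $\ct\circ\ct_{-\mu}=\cS_{-\mu}$.
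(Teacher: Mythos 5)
Your proposal is correct and follows essentially the same route as the paper: splice \eqref{;invbd} and \eqref{;opp} together via the composition identities $\ct\circ\ct_{\La}=\cS_{\mu}$ and $\ct_{\La}\circ\ct=\cS_{-\mu}$, each checked through \lref{;lttrans}\thetag{i}, \thetag{iii}, \thetag{iv} and \eqref{;pm}. The only cosmetic difference is that the paper verifies $\ct_{\La}\circ\ct=\cS_{-\mu}$ directly and gets the other identity by taking inverses via \eqref{;comps}, whereas you compute both directly.
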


\begin{proof}
By \eqref{;invbd}, we have 
\begin{align*}
 \left\{ 
 \bigl( 
 \ct _{\La }(\db{\mu })(s),\,\ct (\db{\mu })(s)
 \bigr) 
 \right\} _{0\le s\le t}
 \eqd 
 \left\{ 
 \bigl( 
 \db{\mu }_{s},\,(\ct \circ \ct _{\La })(\db{\mu })(s)
 \bigr) 
 \right\} _{0\le s\le t}.
\end{align*}
By \eqref{;opp}, the left-hand side is identical in law with 
\begin{align*}
 \left\{ 
 \bigl( 
 (\ct _{\La }\circ \ct )(\db{-\mu })(s),\,\db{-\mu }_{s}
 \bigr) 
 \right\} _{0\le s\le t}.
\end{align*}
Therefore, in order to prove the proposition, it suffices to verify 
the following two relations: 
\begin{align*}
 \ct _{\La }\circ \ct =\cS _{-\mu }; && 
 \ct \circ \ct _{\La }=\cS _{\mu }.
\end{align*}
As for the former, for every $\phi \in C([0,t];\R )$, we have 
\begin{align*}
 \ct _{\La }\bigl( \ct (\phi )\bigr) 
 &=\tr _{\ct (\phi )(t)-k_{\mu }(\ct (\phi )(t),Z_{t}(\ct (\phi )))}
 \bigl( \ct (\phi )\bigr) \\
 &=\tr _{-\phi _{t}-k_{\mu }(-\phi _{t},Z_{t}(\phi ))}
 \bigl( \tr _{2\phi _{t}}(\phi )\bigr) \\
 &=\tr _{\phi _{t}+k_{-\mu }(\phi _{t},Z_{t}(\phi ))}(\phi ),
\end{align*}
which is $\cS _{-\mu }(\phi )$, where the second line 
follows from \thetag{i} and \thetag{iii} of \lref{;lttrans} 
together with the definition~\eqref{;dct} of $\ct $, and 
the third from \eqref{;pm}. 
Since $(\ct _{\La }\circ \ct )^{-1}=\ct \circ \ct _{\La }$, 
we also obtain the latter thanks to \eqref{;comps}.
\end{proof}

We return to examples of \cref{;cmain}.
\begin{exm}\label{;ex423}
Given $\mu \in \R $ and $\al >0$, let 
\begin{align*}
 \la (\xi ,\zeta )
 =\frac{K_{\mu }(\al e^{\xi })}{K_{\mu }(\al )}
 \exp \left( 
 -\frac{\al ^{2}}{2}e^{\xi }\zeta -\frac{\mu ^{2}}{2}t
 \right) ,\quad \xi \in \R ,\ \zeta >0,
\end{align*}
where $K_{\mu }$ is the modified Bessel function of 
the third kind (or the Macdonald function) of order $\mu $ 
(see, e.g., \cite[Section~5.7]{leb}). Then, with the corresponding 
$\ct _{\La }$, we have, from \eqref{;qcmaind},
\begin{equation*}
\begin{split}
 &\ex \!\left[ 
 F\bigl( 
 \ct _{\La }(B),B
 \bigr) \frac{K_{\mu }(\al e^{B_{t}})}{K_{\mu }(\al )}
 \exp \left( 
 -\frac{\al ^{2}}{2}A_{t}-\frac{\mu ^{2}}{2}t
 \right) 
 \right] \\
 &=\ex \!\left[ 
 F\bigl( 
 B,\ct _{\La }(B)
 \bigr) \frac{K_{\mu }(\al e^{B_{t}})}{K_{\mu }(\al )}
 \exp \left( 
 -\frac{\al ^{2}}{2}A_{t}-\frac{\mu ^{2}}{2}t
 \right) 
 \right] ,
\end{split}
\end{equation*}
noting $e^{B_{t}}Z_{t}=A_{t}$. Notice that the process 
\begin{align*}
 \frac{K_{\mu }(\al e^{B_{s}})}{K_{\mu }(\al )}
 \exp \left( 
 -\frac{\al ^{2}}{2}A_{s}-\frac{\mu ^{2}}{2}s
 \right) ,\quad s\ge 0,
\end{align*}
is a martingale with initial value $1$. Therefore, applying Girsanov's formula, 
we see that the law of the solution 
$X^{(\al ,\mu )}=\bigl\{ X^{(\al ,\mu )}_{s}\bigr\} _{0\le s\le t}$ 
to the SDE
\begin{align}\label{;sde}
 dX_{s}=dB_{s}+\left\{ 
 \mu -\al e^{X_{s}}\biggl( \frac{K_{\mu +1}}{K_{\mu }}\biggr) 
 \bigl( \al e^{X_{s}}\bigr) 
 \right\} ds,\quad X_{0}=0,
\end{align}
is invariant under $\ct _{\La }$; in fact, 
\begin{align*}
 \left( 
 \ct _{\La }\bigl( X^{(\al ,\mu )}\bigr) ,\,X^{(\al ,\mu )}
 \right) 
 \eqd 
 \left( 
 X^{(\al ,\mu )},\,\ct _{\La }\bigl( X^{(\al ,\mu )}\bigr) 
 \right) .
\end{align*}
The expression of the drift term in the above SDE is due to the relation 
\begin{align*}
 \frac{d}{dz}\left\{ 
 z^{-\mu }K_{\mu }(z)
 \right\} =-z^{-\mu }K_{\mu +1}(z)
\end{align*}
(see, e.g., \cite[Equation~\thetag{5.7.9}]{leb}).
\end{exm}

\begin{rem}
\thetag{1} Because of the fact that $K_{\mu }=K_{|\mu |}$ 
(see, e.g., \cite[Equation~\thetag{5.7.10}]{leb}), the law of 
$X^{(\al ,\mu )}$ is the same as that of $X^{(\al ,|\mu |)}$, and 
so is the drift term of SDE~\eqref{;sde}, which is indeed 
the case thanks to the recurrence relation 
\begin{align*}
 K_{\mu -1}(z)-K_{\mu +1}(z)=-\frac{2\mu }{z}K_{\mu }(z)
\end{align*}
(see, e.g., \cite[Equation~\thetag{5.7.9}]{leb}).

\thetag{2} We see from \cite[Theorem~1.5$'$]{myPI} that 
the infinitesimal generator of the diffusion process 
$\bigl\{ -\log \dz{\mu }_{s}\bigr\} _{s>0}$, namely 
\begin{align*}
 \db{\mu }_{s}-\log \da{\mu }_{s},\quad s>0,
\end{align*}
is the same as that of $X^{(1,\mu )}$.
\end{rem}

The next example utilizes \cref{;cmain} in full generality.

\begin{exm}\label{;ex424}
We deal with Ornstein--Uhlenbeck processes as an example; see, e.g., 
\cite[pp.~140 and 141]{bs} for their precise description. For every $\al \in \R $, 
consider the function $\La :C([0,t];\R )\to (0,\infty )$ given by 
\begin{align*}
 \La (\phi )=
 \exp \left( 
 -\frac{\al }{2}\phi _{t}^{2}+\frac{\al }{2}t-\frac{\al ^{2}}{2}\int _{0}^{t}\phi _{s}^{2}\,ds
 \right) ,\quad \phi \in C([0,t];\R ),
\end{align*}
which fulfills the assumption of \cref{;cmain}. The associated function 
$h_{\La }:\R \times C([0,t];\R )\to \R $ is defined through 
\begin{align*}
 &\int _{-\infty }^{h_{\La }(\xi ,\phi )}
 dx\,\exp \left\{ 
 -\frac{\al }{2}x^{2}-\frac{\al ^{2}}{2}\int _{0}^{t}\bigl( 
 \tr _{\phi _{t}-x}(\phi )(s)
 \bigr) ^{2}\,ds-\frac{\cosh x}{Z_{t}(\phi )}
 \right\} \\
 &=\int _{\xi }^{\infty }
 dx\,\exp \left\{ 
 -\frac{\al }{2}x^{2}-\frac{\al ^{2}}{2}\int _{0}^{t}\bigl( 
 \tr _{\phi _{t}-x}(\phi )(s)
 \bigr) ^{2}\,ds-\frac{\cosh x}{Z_{t}(\phi )}
 \right\} 
\end{align*}
for $\xi \in \R $ and $\phi \in C([0,t];\R )$.
Then, as a consequence of \cref{;cmain}, the law of the Ornstein--Uhlenbeck 
process $X=\{ X_{s}\} _{0\le s\le t}$ described by the SDE 
\begin{align*}
 dX_{s}=dB_{s}-\al X_{s}\,ds,\quad X_{0}=0,
\end{align*}
is invariant under the corresponding transformation $\ct _{\La }$, or more precisely, 
\begin{align*}
 \bigl( \ct _{\La }(X),\,X\bigr) \eqd \bigl( X,\,\ct _{\La }(X)\bigr) .
\end{align*}
\end{exm}

%%%%%%%%% References %%%%%%%%%

\end{document}